\documentclass[11pt]{article}
\usepackage{cite}

\usepackage{amsmath,amssymb,amsthm}
\usepackage{dsfont}
\usepackage{mathrsfs}
\usepackage{color}
\usepackage[utf8]{inputenc}
\usepackage[T1]{fontenc}
\usepackage{mathptmx}
\usepackage{changes}

\setlength{\oddsidemargin}{0mm}
\setlength{\evensidemargin}{0mm}
\setlength{\textwidth}{165mm}
\setlength{\headheight}{5mm}
\setlength{\headsep}{7mm}
\setlength{\topmargin}{0mm}
\setlength{\textheight}{223mm}

\title{Quasi-processes for branching Markov chains}

\author{\textsc{Steffen Dereich and Martin Maiwald\blfootnote{Institute for Mathematical Stochastics, University of Münster, Orléans-Ring 10, 48149 Münster, steffen.dereich@wwu.de and martin.maiwald@me.com}}}

\DeclareMathAlphabet{\mathcal}{OMS}{cmsy}{m}{n}

\DeclareMathOperator{\pred}{pred}

\newcommand{\set}[1]{\{{#1}\}}

\newcommand{\dd}{\mathrm{d}}
\newcommand{\abs}[1]{\left| {#1} \right|}

\newcommand{\cB}{\mathcal{B}}
\newcommand{\cD}{\mathcal{D}}
\newcommand{\cT}{\mathcal{T}}
\newcommand{\cP}{\mathcal{P}}

\newcommand{\cI}{\mathcal{I}}

\newcommand{\cS}{\mathcal{S}}
\newcommand{\cU}{\mathcal{U}}
\newcommand{\cW}{\mathcal{W}}
\newcommand{\cK}{\mathcal{K}}

\newcommand{\IP}{\mathbb{P}}

\newcommand{\rd}{\color{red}}

\newcommand{\blue}{\mathrm{blue}}
\newcommand{\white}{{\mathrm{white}}}

\newcommand{\tr}{\mathfrak t}
\newcommand{\1}{\mathds{1}}
\newcommand{\N}{\mathbb{N}}
\newcommand{\Z}{\mathbb{Z}}
\newcommand{\E}{\mathbb{E}}

\newcommand{\IT}{\mathbb{T}}
\newcommand{\II}{\mathbb{I}}
\renewcommand{\P}{\mathbb{P}}

\newcommand{\cQ}{\mathcal Q}

\newcommand{\cH}{\mathcal H}
\newcommand{\rest}[2]{\left.{#1}\right|_{#2}}

\newcommand{\sfrac}[2]{\mbox{$\frac{#1}{#2}$}}
\newcommand{\eps}{\epsilon}

\newcommand{\bbar}[1]{{\bar {\bar #1}}}

\newcommand\blfootnote[1]{%
  \begingroup
  \renewcommand\thefootnote{}\footnote{#1}%
  \addtocounter{footnote}{-1}%
  \endgroup
}

%
%
%

\theoremstyle{definition}
\newtheorem{definition}{Definition}[section]

\newtheorem{lemma}[definition]{Lemma}            
\newtheorem{example}[definition]{Example} 
\newtheorem{remark}[definition]{Remark}

\newtheorem{construction}[definition]{Construction}
\newtheorem{corollary}[definition]{Corollary}

\newtheorem{notation}[definition]{Notation} 

\theoremstyle{theorem}
\newtheorem{theorem}[definition]{Theorem}    

\newtheorem{proposition}[definition]{Proposition}

\begin{document}

\maketitle

\noindent{\slshape\bfseries Abstract.} Potential theory is a central tool to understand and analyse Markov processes. In this article, we develop its probabilistic counterpart for branching Markov chains. 
Specifically, we examine versions of quasi-processes or interlacements that incorporate branching, referred to as branching quasi-processes. These processes are characterized by their occupation measures. If a certain decorability condition is fulfilled, there's an isomorphism between the set of branching quasi-processes and the set of excessive measures, where the excessive measures correspond to the occupation measures of the branching quasi-processes. Utilizing a branching quasi-process as an intensity measure for a Poisson point process leads to the formulation of random interlacements with branching. In cases where individuals reproduce with an average rate of one or less, we detail a construction that draws on classical interlacements. Our approach significantly employs the additive structure of branching Markov chains, with the spinal representation of the branching processes serving as a crucial technical tool.


\bigskip

\noindent{\slshape\bfseries Keywords.}  Branching Markov chain, branching random walk,
Galton-Watson tree, interlacement, quasi-process, excessive measure

\bigskip

\noindent
{\slshape\bfseries 2020 Mathematics Subject Classification.}  Primary 60J80, 60J50; Secondary 60J45

\tableofcontents
\section{Introduction}

The study
 of potential theory and its probabilistic counterpart was a very prominent area of research in the last century, as evidenced by the number of monographs on this subject, for example, references \cite{Doob84,DS84,Get90}. 
 
This article concentrates on quasi-processes, which have recently become noteworthy for their role as intensity measures of random interlacements. A random interlacement, modeled as a Poisson point process, captures the local behavior of random walks near a reference point, relevant for understanding vacant set percolation, as shown in~\cite{SidSzn09,Szn10}.  

Originally, research was focused on random walks on large boxes of $\Z^{d}$, but similar concepts apply for other random walks such as random walks on random graphs, see~\cite{CTW11, CT12,CT13,CH22}.
Interlacements also help in characterizing local limits for limit theorems concerning random graphs, as seen in \cite{D23}.
Recently, a series of articles by Zhu addressed the analysis of visiting probabilities and the construction of interlacements for critical branching random walks on $\Z^d$,  
see \cite{Zhu1,Zhu2,Zhu3,Zhu4}. Connectivity properties are explored in \cite{ProZha19}.

In this article, we take a more generalized perspective, exploring the idea of a quasi-process for general branching Markov chains, which we term \emph{branching quasi-process}.
 In the particular case where the branching Markov chain (BMC) is a critical simple random walk on $\Z^{d}$ $(d=5,6,\ldots)$ one obtains a branching interlacement by generating a Poisson point process with the branching quasi-process being its intensity. In our analysis, the underlying BMC is very general and the concept of a  branching quasi-process is intuitively best described as a general branching process (possibly with infinite past) for which particles evolve forward in time as for classical BMCs.

We emphasise that our main result fully characterises this class of processes (under an appropriate decorability assumption) in terms of occupation measures.
 Our investigation paves a rigorous path to devising branching interlacements within significantly broader contexts, such as for branching random walks in heterogeneous environments. 
 It is a first step to showing local limit theorems as in \cite{D23} for branching random walks on random graphs. Additionally, we introduce a way to construct these branching interlacements using the framework of classical interlacements, effectively linking the two concepts.

From a technical point of view a BMC is per se a Markov chain so that it can be treated with the tools of potential theory where the classical state space, say $S$, needs to be replaced by the set of counting measures on $S$, where $S$ is now the  space in which the  constituents of the BMC are located. 
A key characteristic of a BMC is that its branches evolve independently, which simplifies the analysis and is essential for connecting branching quasi-processes to excessive measures on $S$, drawing on established results in the analysis of quasi-processes, see for instance \cite{Get90}.

The notion of a quasi-process stems from classical potential theory. It was introduced by Weil \cite{Weil71} as a
generalisation of its discrete version also called approximate Markov chain, studied by Hunt \cite{Hunt60}. We stick to the notion of a quasi-process even though we consider discrete time in this article. 
 We introduce the class of branching  quasi-processes and characterise the individual branching quasi-processes in terms of their occupation measure. Provided that a particular decorability assumption is satisfied we show that for every excessive measure there indeed exists a branching quasi-process which has the latter measure as occupation measure. This relation then provides  an isomorphism between  the cone of excessive measures and the cone of branching quasi-processes.
   Our analysis requires finiteness of certain Green's functions. Beyond that we only need to assume a weak form of irreducibility. The proofs  make strong use of a novel spine construction which is different from the ones used  by Zhu before.
   

Interestingly, sometimes in the construction of a branching quasi-processes, an explosion phenomenon can be observed. 
As found  by Zhu \cite{Zhu3} this is the case, for instance, for the simple, symmetric, critical branching random walk on $\Z^d$ for $d\le 4$ and for these processes an interlacement construction breaks down. More explicitly, in that case the ``decoration'' of a spine results in a configuration where all sites are visited by infinitely many individuals. We  will formalise the notion of decorability and give a sufficient criterion in terms of the Green's function. In particular, we find that the symmetric, strictly subcritical setting is always decorable.

We stress that our aim is to extend some parts of classical potential theory to branching Markov chains. The intensity measures of interlacements are quasi-processes. Such interlacements  appear as local limits when observing the traces left by a random walk around typical points in a large graph, a  crucial point towards understanding   vacant set percolation \cite{SidSzn09,Szn10,CTW11}. Results for branching random walks in that direction can be found in~\cite{Zhu4}. We believe that such connections hold in much more general situations and the current research aims to set the foundations for further analysis. 

In the next section, we introduce the central  notation and give one of the main results, see Theorem~\ref{thm:main3}. We also include a brief overview of the article at the end of this section.
In the article, we encode the structure of a BMC in a particular way, where each constituent/individual is equipped with an independent uniformly chosen label. This has the appeal that arguments can be carried out easily in full rigor and, in particular, the heavy use of the  branching Markov property is very transparent.

\section{Preliminaries and main results}\label{sec2}

We denote by $\N=\{1,2,\dots\}$ the set of strictly positive integers and set $\N_0=\N\cup\{0\}$.
We consider branching Markov chains. To introduce these we let
\begin{itemize}
	\item $S$ be a countable set (the \emph{state space}),
	\item $(p(x,y))_{x,y\in S}$ be a stochastic matrix (the \emph{transition kernel}), and
	\item $(d_x(m))_{x\in S, m\in\mathbb{N}_0}$ be a stochastic matrix (the \emph{offspring distribution}).
\end{itemize}

We consider branching Markov chains where individuals in a state $x\in S$ give independently rise to $m$ descendants with probability $d_x(m)$ each one having an independent $p(x,\,\cdot\,)$-distributed state.
%
%
%

One way to rigorously define a branching Markov chain is by using Ulam-Harris notation 
 where vertices are indexed by finite words with the empty word referring to  the initial individual.
   Generally, we call 
a set of finite words $\bigcup_{n=0}^\infty  \mathbb{N}^n$, where $\mathbb{N}^0= \set{\emptyset}$  and $\emptyset$ 
is  the empty word,  \emph{Ulam-Harris tree}, if it satisfies 
the following conditions:
\begin{enumerate}
	\item $U$ contains the empty word,
	\item discarding the rightmost letter of $u\in U\setminus\set{\emptyset}$ yields
	an element  $\hat{u}$ of $U$, 
	\item  reducing the last letter of a  word $u\in U$ 
 yields  again a word in $U$, and 
\item  for every length $n\in\N$ there are at most finitey many words of length $n$ in $U$.
\end{enumerate}
An Ulam-Harris tree $\cT$ encodes a genealogical tree with $\hat {u}$ being the predecessor of $u\in\cT\backslash\{\emptyset\}$.
We let $\IT=\IT(S)$ denote the set of all subsets $\mathfrak t$ of 
$$
\bigcup_{n=0}^\infty  \mathbb{N}^n \times S
$$
with the property that the projection onto the first component is injective and has as image an Ulam-Harris tree, say $\cP_{\mathfrak t}$. For $\mathfrak t\in \IT$, we denote by $V_{\mathfrak t}:\cP_{\mathfrak t}\to S$ the mapping with 
$$
(i,x)\in \mathfrak t \ \Leftrightarrow \ V_{\mathfrak t}(i)=x.
$$
 Further we set $\mathrm{pred}(i)=\hat i$ and denote for  $i\in\cP_{\mathfrak t}$ by
$$
C_{\mathfrak t}(i)=\{j\in \cP_{\mathfrak t}:\mathrm{pred}(j)=i\}.
$$
the descendants of $i$ in $\mathfrak t$.
We use the standard approach (see \cite{MR3803914}) to define the BMC$(d,p)$ in Ulam-Harris notation and write $\P^x_\mathrm{UH}$ for its distribution on $\IT(S)$ when started with an initial individual in $x\in S$. 

We define the Green's function $G=(g(x,y))_{x,y\in S}$ via 
$$
g(x,y)= \E^x\Bigl[\sum_{i\in \cP} \1 _{\{y\}} (V(i))\Bigr]
$$
for $x,y\in S$ which is just the expected  number of individuals in $y$ when starting the branching Markov chain in $x$. 
In the following, we will always assume that the Green's function is finite for all $x,y\in S$. The branching Markov chain is related to the linear operator $Q=(q_{x,y})_{x,y\in S}$ (to be called \emph{intensity operator}) defined by
\begin{align}\label{intensity_operator}
q_{x,y}= m_x p_{x,y},
\end{align}
where $m_x=\sum_{m\in\N} m \, d_x(m)$ is the expected number of descendants. As is well known one has
$$
G=\sum_{n=0}^\infty Q^n,
$$
where the matrix products $Q^n$ and also the infinite sum are well-defined since all entries of $Q$ are non-negative. In particular, by finiteness of the Green's function it follows finiteness of $m_x$ for every $x\in S$.

%

In oder to introduce the concept of a branching quasi-processes 
 we first need to introduce an appropriate notion for a branching process in doubly infinite time.
 Informally this is a random  finite or infinite set of (distinguishably) labeled vertices that have
  a location and one or none  predecessor.  
  Later we will use independent on $[0,1]$  uniformly chosen labels to index the constituents of the branching Markov chain and we denote by  $\mathcal U$ the uniform distribution on $[0,1]$.

\begin{definition}
Let $\tr\subset ([0,1]\cup\set{\emptyset})\times [0,1]\times S$ be a countable set.
 If 
\begin{itemize}
\item all points have a distinct second component (\emph{distinct labels}),
\item all points have in its first component either $\emptyset$ or the label of a different point (\emph{unique predecessor}),
\item no point is the iterated predecessor of itself (\emph{no circle condition}) and 
 \item for every $p\in [0,1]$, the set $\tr \cap (\set{p}\times [0,1]\times S)$ is
finite (\emph{transience}). 
\end{itemize}
then $\mathfrak t$ is called \emph{ordered forest}. 
\end{definition}

An ordered forest $\tr$ encodes a (possibly) doubly infinite genealogy
on the population
$$
\mathcal P_\tr=\{i\in [0,1]: \exists (p,x)\in ([0,1]\cup\set{\emptyset})\times S\text{ with } (p,i,x)\in \tr\}
$$
 which is the projecton of $\mathfrak t$ on the second component.
We set for $(p,i,x)\in\tr$
\[
  V_{\mathfrak{t}}(i) = x\quad \text{and}\quad  \pred_{\mathfrak{t}}(i) = p
\]
and call $V_{\mathfrak{t}}(i)$ the \emph{location} and  $\pred_{\mathfrak{t}}(i)$ the \emph{predecessor} of $i$. 
In the case that $\pred_{\mathfrak{t}}(i)=\emptyset$ we also say that $i$ does not have a predecessor. For convenience, we set $\pred_\mathfrak{t}(\emptyset)=\emptyset$ and $V_{\mathfrak t}(\emptyset)=\partial$ where $\partial$ is an additional cemetary state not contained in $S$. Note that~$\mathfrak t$ is uniquely described by the \emph{population} $\cP_\mathfrak{t}$, the \emph{predecessor mapping} $\pred_\tr:\cP_\tr\to \cP_\tr\cup\{\emptyset\}$ and the \emph{location mapping} $V_\tr:\cP_\tr\to S$.

An ordered forest describes a directed graph that is obtained  by linking all individuals of the population to their predecessor. As a consequence of the no circle condition the undirected version of the graph is indeed a forest.

\begin{definition}Let $\tr$ be an ordered forest. \begin{enumerate}
\item If  for every $x\in S$, the set $\tr\cap (([0,1]\cup\{\emptyset\})\times [0,1]\times \{x\})$ is finite, then $\tr$ is called \emph{transient}.
\item If the  forest $\tr$ is connected in the sense that arbitrary pairs of individuals have a common iterated predecessor, then $\tr$ is called \emph{ordered tree}. 
\end{enumerate}
We denote by $\mathfrak F(S)$ and $\mathfrak T(S)$ the set of transient, ordered forests and trees for the state space $S$, respectively. Typically, we omit the state space $S$ and briefly write $\mathfrak F$ and $\mathfrak T$.
\end{definition}

\begin{notation} We use the following notation when working with forests $\mathfrak t$. For a  set $B\subset S$ and $i\in \cP_\tr$ we set
\begin{enumerate}
\item  $\displaystyle{\mathfrak{F}_B=\bigl\{\tr\in\mathfrak F: \tr\cap\bigl(([0,1]\cup\{\emptyset\})\times[0,1]\times B\bigr) \not=\emptyset\bigr\}, \ \mathfrak T_B=\mathfrak F_B\cap \mathfrak T}$, \\
the set of \emph{transient forests/trees hitting $B$},
\item
$\displaystyle{
	\mathcal{H}_B(\tr) =
	\set{ (i,V_\tr(i)): i\in\mathcal P_\tr\text{ with  $V_t(i)\in B$  and $V_\mathfrak{t}(\pred^n(i))\not\in B$ for all $n\in\mathbb{N}$}},
}$ \\ the \emph{$B$-entrance of $\mathfrak t$},
\item 	$\displaystyle{
		\Pi_i(\mathfrak{t})=\set{(\emptyset,i,V(i))}\cup
		\set{(p,i',x)\in\mathfrak{t} :
		\exists n\in\mathbb{N}\text{ s.t.\ }
		\pred_{\mathfrak{t}}^{n}(i') = i},}$ \\ the \emph{subtree induced by the progeny  of $i$ in $\tr$},
\item $\displaystyle{
	\Pi_B(\mathfrak{t}) =\bigcup_{(i,x)\in\mathcal{H}_B(\tr)}\pi_i(\mathfrak{t}),}$ 
	the \emph{progeny of the $B$-entrance},
\item $\displaystyle{
  |i|_{\mathfrak{t}}
  = \sup\set{n\in\mathbb{N}_0: \pred_{\mathfrak{t}}^{n}(i)\not=\emptyset}}$, the \emph{generation of $i$}.
\end{enumerate}
\end{notation}

Ordered transient forests are formally represented as $\sigma$-finite counting measures and we endow the space~$\mathfrak F$ with the respective standard $\sigma$-field. 
We will use  a \emph{point process representation} for branching Markov chains. Roughly speaking, our notation is obtained from the classical   Ulam-Harris notation  by assigning each individual of a realization independently a  $\cU$-distributed label/identifier and forming a tree $\tr\in\mathfrak T$ by all tuples $(p,i,x)$ constituted by the identifier of the parent $p$, the identifier of the individual  $i$ and its location $x$.

Formally, we define a branching Markov chain in point process representation as follows:
\begin{definition}\label{def:BRW}
Let $\cI=\cI(S)$ be the set of finite subsets of $[0,1]\times S$ with distinct first components (labels).
For every  $\mathbf x\in \cI$ we denote by $\P^{\mathbf x}$ the distribution on $\mathfrak F$ that is formed as follows:
\begin{enumerate}
\item for each element $(i,x)$ of $\mathbf x$ we add one element $(\emptyset,i,x)$
 (generation $0$) and
\item then consecutively form generation by generation  by independently adding for each individual $(p,i,x)$ of the previous generation
\begin{enumerate}\item  a $d_x$-distributed number of descendants
\item at independent   $p(x,\cdot)$-distributed locations
\item with the predecessor being $i$ and
\item independent $\mathcal{U}$-distributed labels.
\end{enumerate}\end{enumerate}
We also define $\P^\mathbf{x}$ for finite counting measures $\mathbf x$ on $S$ by first assigning each point of~$\mathbf x$ an independent $\mathcal{U}$-distributed label and then proceeding as above. Furthermore, we briefly write for $x\in S$, $\P^x$ for $\P^{\delta_x}$ and we let for measures $\mu$ on $\cI$ or the finite counting measures on $S$
$$
\P^\mu= \int \P^\mathbf{x}\,\mu(\dd \mathbf x).
$$
We briefly call the respective branching Markov chain $\mathrm{BMC}(d,p)$.
\end{definition}

The aim of this article is to analyse the concept of a \emph{quasi-process} for branching Markov chains.
\begin{definition}\label{def:bqp}
A measure $\Xi$ on $\mathfrak T$ 
is called \emph{branching quasi-process} (for the $\mathrm{BMC}(d,p)$) if it satisfies for all finite subsets $B$ of $S$ that $\Xi( \mathfrak{T}_B)<\infty$ and that
\begin{align}\label{qp-prop}
	 \Xi|_{\mathfrak{T}_B}( \Pi_B\in \,\cdot\,) = \P^{\mu_B},
\end{align}
where $\mu_B$ is the related $B$-entrance measure  given by
$$
\mu_B= \Xi|_{\mathfrak F_B}(V(\cH_B)\in\,\cdot\,)
$$
with $V(\cH_B)$ being the counting measure of the locations of the individuals in $\cH_B$.
%
%
\end{definition}
\begin{remark}\begin{enumerate}
\item
A branching quasi-process is a measure under which $\Pi_B$ is for every finite set $B\subset S$ a branching Markov chain started in an appropriate distribution (finite measure) with all  individuals having independent $\cU$-distributed labels.  We call~(\ref{qp-prop}) the  \emph{Markov property} of the quasi-process. 
\item The labeling is independent of the related unlabelled  graph and hence the measure  $\Xi$ is uniquely specified by the distribution modulo graph isomorphisms (respecting the order). 
\item If almost every realization of a  branching quasi-process has an initial individual, then it equals $\P^\mu$ for a measure $\mu$ on~$S$. In what follows, we are mainly interested in branching quasi-processes without initial individual.
 \end{enumerate}
\end{remark}

To give the main result we need to additional definitions.

\begin{definition}
A finite subset  $B$ of $S$ satisfying  $$ G(x,B):=\sum_{y\in B} g(x,y)>0 \text{ \ \ or, equivalently, \ \ } \P^x(\cH_B\not=\emptyset)>0$$
for every $x\in S$, is called \emph{norming region}.
\end{definition}

\begin{theorem}\label{thm:main3}
Let $B\subset S$ be a norming region, set for every $x\in S$,  $h(x)=\E^x[\#\cH_B]$ 
and suppose that
$$
\bar m_x:=\sum_{k\in\N} (k-1)  \, d_x^\mathrm{s.b.}(k)
$$
is finite\footnote{This assumption is equivalent to the existence of the second moment of each offspring distribution $d_x$.}.  
If
\begin{align}\label{cond:deco2}
\sup_{x\in S} \frac 1{h(x)} \sum_{z\in B^c} \frac {\bar m_z}{m_z}  \, g(x,z) \,h(z)^2  <\infty,
\end{align}
then there exists  an  isomorphism between the cone of all branching quasi-processes and the cone of all $Q$-excessive measures such that a branching quasi-process $\Xi$ is related to a unique excessive measure $\nu$ on $S$ given by
$$
\nu(z)=\int \sum_{i\in \cP} \1_{\{z\}}(X(i)) \, d\Xi.
$$
\end{theorem}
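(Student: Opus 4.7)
The plan is to prove the claimed bijection in two directions, with the decorability hypothesis entering in the harder direction.

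\emph{Forward direction ($\Xi\mapsto\nu$ is $Q$-excessive).} Given a branching quasi-process $\Xi$, let $\nu(z)=\int\#\{i:V(i)=z\}\,d\Xi$. Finiteness of $\nu$ at every point follows from $\Xi(\mathfrak{T}_{\{z\}})<\infty$ together with the Markov property~(\ref{qp-prop}) and finiteness of the Green's function. To see $\nu Q\leq\nu$, fix $x,y\in S$ and apply the Markov property with $B=\{x,y\}$: under $\Xi|_{\mathfrak{T}_B}$ the progeny $\Pi_B$ is a $\mathrm{BMC}(d,p)$ started from $\mu_B$, and since $x\in B$ every individual at $x$ already lies in $\Pi_B$, giving $(\mu_B G)(x)=\nu(x)$. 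The one-generation identity in a BMC then yields
$$
\int\#\{i:V(i)=y,\,V(\pred(i))=x\}\,d\Xi=q(x,y)\,(\mu_B G)(x)=q(x,y)\,\nu(x),
$$
and summing over $x$ gives $(\nu Q)(y)\leq\nu(y)$ after discarding the nonnegative ancestorless contribution at $y$.

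\emph{Reverse direction (construction of $\Xi$ from $\nu$).} Given a $Q$-excessive $\nu$, I would build $\Xi$ via a spinal decomposition anchored at the norming region $B$. Since $h$ is $Q$-harmonic on $B^c$ with $h\equiv 1$ on $B$, the Doob transform $\tilde p(x,y):=q(x,y)h(y)/h(x)$ is substochastic, stochastic on $B^c$ and absorbed at $B$. A sample of $\Xi$ consists of a doubly infinite \emph{spine} in $S$ whose location intensity is $\nu/h$, with forward kernel $\tilde p$ (stopped on hitting $B$) and backward kernel the $(\nu/h)$-reversal of $\tilde p$; each spine vertex $z$ is then decorated by a size-biased sibling cluster (mean $\bar m_z$), each sibling rooting an independent $\mathrm{BMC}(d,p)$ subtree. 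The entrance measure $\mu_B$ is the absorption law of the forward spine at $B$. Using the standard branching-spine decomposition of a BMC, $\Pi_B$ under the constructed $\Xi$ is $\P^{\mu_B}$-distributed, so the Markov property~(\ref{qp-prop}) holds for $B$, and extends to arbitrary finite $B'$ by applying the same decomposition to each spine after restricting to its $B'$-entrance. The identity $G=\sum_{n\ge 0}Q^n$ combined with the spine identity $h\cdot(\nu/h)=\nu$ and the Green's-function weights of the sibling subtrees then identifies the occupation measure with $\nu$.

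\emph{Main obstacle and completion.} The principal difficulty is to show that this construction yields a $\sigma$-finite measure with $\Xi(\mathfrak{T}_B)<\infty$. Concretely, the expected $B$-hit contribution of the sibling subtrees along a spine emanating from $x$ reduces, after size-biasing and integration against $g(x,\cdot)$ and $h(\cdot)$, to a second-moment expression whose uniform boundedness in $x$ (after normalisation by $h(x)$) is precisely the left-hand side of~(\ref{cond:deco2}); failure of this bound produces the explosion phenomenon described in the introduction, where the sibling subtrees along a single spine accumulate infinitely many $B$-hits. Once finiteness is established, uniqueness of $\Xi$ given $\nu$ follows by using the Markov property to recover $\mu_B$ from $\nu|_B=\mu_B G|_B$ via the invertibility of $G$ restricted to $B$, and ranging over all finite $B'\supset B$ to pin down the pre-entrance law of $\Xi$. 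Linearity of both $\Xi\mapsto\nu$ and its inverse is immediate from the intensity-based nature of the spine construction, yielding the claimed cone isomorphism.
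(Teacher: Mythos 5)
Your broad strategy mirrors the paper's: a spine with $P_h$-transitions, size-biased sibling decorations, and condition~(\ref{cond:deco2}) to control the potential explosion. The paper carries this out by introducing the $B$-biased BMC, proving the spine decomposition $\bar\P_B^{(x,\blue)}=\P_B^x\otimes\bar\Gamma^{\mathrm{col}}=P_h^x\otimes\bar\Gamma^{\mathrm{deco}}$ (Theorem~\ref{thm:spine}), establishing an isomorphism between branching quasi-processes and decorable $P_h$-quasi-processes (Theorem~\ref{theo:alt_main}), relating occupation measures (Theorem~\ref{thm:35676}), characterising $P_h$-quasi-processes via $Q$-excessive measures through one-sided Kuznetsov measures (Theorem~\ref{thm:main1}), and verifying decorability under~(\ref{cond:deco2}) (Theorem~\ref{thm:485}). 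Your forward direction (applying~(\ref{qp-prop}) with $B=\{x,y\}$ and a one-generation comparison) is a genuinely different and more elementary route than the paper's factorisation through $\eta$; it is plausible, but you take $\nu(z)<\infty$ for granted. The definition of a branching quasi-process only guarantees $\Xi(\mathfrak T_B)<\infty$, i.e.\ that the entrance measure $\mu_B$ has finite \emph{mass}; it does not immediately give finiteness of the entrance \emph{intensity} $\int\#\mathbf x\,d\mu_B(\mathbf x)$, which is what you need for $(\mu_B G)(x)<\infty$. The paper extracts this from $\Xi_B(\mathfrak T)=\eta(D^*)<\infty$ via the spine.

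In the reverse direction there are two concrete errors. First, the spine's location intensity is not $\nu/h$. The correct relation between the spine occupation measure $\mu$ and the target $\nu$ is
$$
\nu(x)=\1_{B^c}(x)\,\frac{\mu(x)}{h(x)}+\sum_{z\in B}\mu(z)\,g(z,x)
$$
(Theorem~\ref{thm:35676}, equivalently Theorem~\ref{thm:main1}(ii)), with $\mu|_B$ recovered from $\nu|_B=\mu|_B G|_{B\times B}$. Both the sibling subtrees along the spine and the progeny of the $B$-entrance add mass beyond the spine, so $\mu\ne h\nu$ and $\mu\ne\nu/h$; your ``spine identity $h\cdot(\nu/h)=\nu$'' is a tautology and does not address this. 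For the same reason the backward kernel is the $\mu$-reversal of $p^h$ (equivalently the $\nu$-reversal $\hat p_{x,y}=\nu(y)q_{y,x}/\nu(x)$ of $Q$ restricted to $\{T_B=0\}$, as in the one-sided Kuznetsov measure), not the $(\nu/h)$-reversal. Second, the decorated spine measure is the $B$-biased measure $\Xi_B$, not $\Xi$ itself: you still need the size-biasing correction $d\Xi/d\Xi_B=1/\#\cH_B$, followed by an extension from $\mathfrak T_B$ to all of $\mathfrak T$ (cf.\ the measures $\Xi^{B'}$ and their limit in the proof of Theorem~\ref{theo:alt_main}, and step~5 of Remark~\ref{rem:3578}). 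Without this, the occupation measure of your construction is $\int N_x\,\#\cH_B\,d\Xi\ne\nu(x)$. Your identification of~(\ref{cond:deco2}) as the bound controlling the expected number of additional $B$-hits produced by decoration is correct and matches the estimate in the proof of Theorem~\ref{thm:485}.
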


The proof is based on a spine construction.  For a given norming region $B$ one considers the $h$-transformed Markov family $(P_h^x:x\in S)$ with state space  $S$ and transition probabilities
$$
p_{x,y}^h=\begin{cases}\frac 1{h(x)} q_{x,y} h(y), &\text{ if }x\in B^c,\\ 0,& \text{ if }x\in B.\end{cases}
$$
In Section~\ref{sec3}, we introduce the spine construction for branching Markov chains started in a point. More explicitly, we will show that a size-biased variant of the branching Markov chain can be obtained by first generating a $P_h$-chain and then applying a decoration procedure on the constituents of the latter chain, see Theorem~\ref{thm:spine} for the main result.

In Section~\ref{sec4}, we use the spine construction to characterise branching quasi-processes in terms of \emph{decorable} $P_h$-quasi-processes (Theorem~\ref{theo:alt_main}), where a $P_h$-quasi-process is called decorable if the decoration procedure of Section~\ref{sec3} produces an element of $\mathfrak T$ (and, in particular, satisfies the transience condition). A link of the occupation measures of the spine and the respective branching quasi-process is provided in Theorem~\ref{thm:35676}. 

Section~\ref{sec:potential_theory} is concerned with the potential theory of non-negative operators $Q$ that are not necessarily sub-Markovian. Here we characterise the cone of $P_h$-quasi-processes in terms of $Q$-excessive measures (Theorem~\ref{thm:main1}). Together with the findings of Section~\ref{sec4} we obtain a characterisation of branching quasi-processes in terms of excessive measures (Theorem~\ref{theo:57}).  

In Section \ref{sec6}, we derive simple construction mechanisms for branching quasi-processes or, equivalently, branching interlacements. If the operator $Q$ is sub-Markovian, then a branching interlacement can be constructed with the help of a classical $Q$-quasi-process with the same occupation measure, see Remark~\ref{rem:3578}.

In Section~\ref{sec7}, we discuss decorability of quasi-processes. We show that criterion~(\ref{cond:deco2}) implies that all $P_h$-quasi-processes are decorable (Theorem~\ref{thm:485}). Together with Therem~\ref{theo:57} it follows Theorem~\ref{thm:main3} above.   We provide further sufficient criteria in the case where $Q$ is symmetric, see Theorem~\ref{thm:deco2}. In particular, it follows that the strictly subcritical setting is decorable, see Remark~\ref{rem946567}.


%
%
%

\section{The $B$-biased branching Markov chain}\label{sec3}

We will make strong use of a related branching Markov chain, the so-called  \emph{$B$-biased BMC}. This process admits a very simple and useful spine construction that will be central in the proof of our main results. 

We denote for $x\in S$
$$
h(x)= \E^x[\#\cH_B]
$$
which is finite for all $x\in S$ since the Green's functions are assumed to be finite.
Note that $ h$ solves 
$$
 h(x)=\begin{cases} Q h(x), & x\not \in B,\\
1, & x\in B.
\end{cases}
$$
%
%
%


\begin{definition} 
The $B$-biased BMC$(d,p)$ is a coloured (or multitype) BMC with particles of two colours blue and white, i.e., with state space $\bar S=S\times \{\mathrm{white}, \mathrm{blue}\}$. It evolves as follows:
\begin{itemize}
\item A white particle produces only white descendants according to the original BMC$(d,p)$ rule.
\item A blue particle in $B$ produces offspring as a white particle does.
\item A blue particle in $x\not\in B$ produces $n\in\N$ descendants with locations $y_1,\dots,y_n\in S$ with probability
\begin{align}\label{bias-BMC-weights}
\frac 1{h(x)}\, d_x(n) \,\prod_{k=1}^n p_{x,y_k} \,\sum_{k=1}^n  h(y_k).
\end{align}
If $n$ descendants with locations $y_1,\dots,y_n$ have been generated in the first step, one  marks one descendant blue and all others white with the $k$-th descendant being chosen with conditional probability
$$
\frac { h(y_k)}{\sum_{\ell=1}^n h(y_\ell)}.
$$
\end{itemize}
We let $\bar \cI=\cI(\bar S)$ be the set of finite subsets of $[0,1]\times \bar S$ with distinct first components (labels) 
and denote for $\mathbf x\in\bar \cI$ by $\bar{\P}_B^{\mathbf x}$ the distribution on $\bar{ \mathfrak F}=\mathfrak F(\bar S)$ with initial population~$\mathbf x$ that is generated in analogy  to Definition~\ref{def:BRW} according to the above rules.
We use analogous notation as in Definition~\ref{def:BRW} and, in particular, denote by $\bar \IP^{(x,c)}_B$ the distribution that is generated when starting with one particle of colour $c\in\{\white,\blue\}$ at position $x\in S$.
Note that the blue particles form a random chain, \emph{the $B$-biased spine}, that ceases to exist after entering $B$.
\end{definition}

\begin{remark}Note that (\ref{bias-BMC-weights}) indeed defines a sequence of probability weights since $q_{x,y}=m_x p_{x,y}$ and $Qh(x)= h(x)$ ($x\not\in B$) imply that
\begin{align*} 
\sum_{n\in\N}&\sum_{y_1,\dots,y_n\in\Sigma} \frac 1{\bar h(x)}\, d_x(n) \,\prod_{k=1}^n p_{x,y_k} \,\sum_{k=1}^n \bar h(y_k) \\
& =\sum_{n\in\N} \frac {nd_x(n)}{m_x} \underbrace{\sum_{y_1,\dots,y_{n-1}\in S} \prod_{k=1}^{n-1} p_{x,y_k}}_{=1} \underbrace{ \frac {1}{\bar h(x)} \sum_{y} q_{x,y}\bar h(y)}_{=1}=1.
\end{align*}
\end{remark}
We give an alternative description of the branching rule of the $B$-biased BMC.

\begin{proposition}\label{prop35783}For the $B$-biased BMC a blue individual in $x\not \in B$ produces a size biased number of descendants, meaning that $n$ descendants are produced with probability
$$
d_x^\mathrm{s.b.}(n):=\frac {n d_x(n)}{m_x},
$$
with the $n-1$ white descendants  evolving independently  according to $p$ and the blue descendant evolving according to $(p^h_{x,y})$ with
\begin{align}\label{def:ph}
p_{x,y}^{ h}=\1_{B^c}(x)\, \frac {q_{x,y} h(y)}{ h(x)}.
\end{align}
\end{proposition}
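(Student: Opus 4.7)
The plan is a direct comparison: write down the joint law of the triple (number of descendants, ordered locations, identity of the distinguished blue child) at a single branching event under each of the two descriptions and verify that they agree. Once this pointwise equality at a single branching step is in hand, induction along generations upgrades it to equality of the two laws on $\bar{\mathfrak F}$, because in both descriptions the remaining dynamics (white offspring evolving under the original $\mathrm{BMC}(d,p)$ rule, fresh application of the spine rule at the blue child, independent $\cU$-distributed labels) is the same.

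Starting from \eqref{bias-BMC-weights} followed by the coloring step, the joint probability that a blue parent at $x\in B^c$ produces $n$ offspring at ordered locations $y_1,\dots,y_n$ with the $k^{*}$-th one marked blue equals
$$
\frac{d_x(n)}{h(x)}\,\prod_{k=1}^n p_{x,y_k}\cdot \sum_{k=1}^n h(y_k)\cdot \frac{h(y_{k^{*}})}{\sum_{\ell=1}^n h(y_\ell)}=\frac{d_x(n)\,h(y_{k^{*}})}{h(x)}\,\prod_{k=1}^n p_{x,y_k}.
$$
Under the alternative description one draws the total count $n$ with probability $d_x^{\mathrm{s.b.}}(n)=nd_x(n)/m_x$, chooses the blue sibling's position uniformly on $\{1,\dots,n\}$ (this uniform choice is the correct match, since the $\cU$-distributed labels make the ordering of the $n$ siblings exchangeable, so unlabelled/labelled data are interchangeable up to that uniform choice), then samples the blue child's location from $p^h_{x,\cdot}$ and each white child's location independently from $p_{x,\cdot}$. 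This yields
$$
\frac{n\,d_x(n)}{m_x}\cdot\frac{1}{n}\cdot\frac{q_{x,y_{k^{*}}}\,h(y_{k^{*}})}{h(x)}\cdot\prod_{k\neq k^{*}}p_{x,y_k}=\frac{d_x(n)\,h(y_{k^{*}})}{h(x)}\,\prod_{k=1}^n p_{x,y_k},
$$
where $q_{x,y}=m_x p_{x,y}$ has been used to cancel the prefactor $m_x$. The two expressions coincide, closing the verification at one branching step.

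No substantive obstacle is foreseen. The only point to keep in mind is the bookkeeping correspondence between the size-biased weight $h(y_{k^{*}})/\sum_\ell h(y_\ell)$ in the original rule and the combination \emph{uniform index} $\times$ $h$-transformed transition $p^h$ in the alternative rule; the identity $p^h_{x,y}=q_{x,y}h(y)/h(x)$ on $B^c$, together with the normalisation $Qh(x)=h(x)$ on $B^c$ (already verified in the preceding remark), reduces the whole proof to the one-line algebraic manipulation above.
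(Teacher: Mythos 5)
Your proof is correct and follows essentially the same route as the paper's: both reduce the proposition to the algebraic identity
\[
\frac{d_x(n)\,h(y_{k^{*}})}{h(x)}\prod_{k=1}^n p_{x,y_k}=\frac{n\,d_x(n)}{m_x}\cdot\frac{q_{x,y_{k^{*}}}h(y_{k^{*}})}{h(x)}\cdot\frac1n\prod_{k\neq k^{*}}p_{x,y_k},
\]
the paper organizing the combinatorics by fixing the blue location and lexicographically ordering the whites (so the factor $n$ appears on the left) while you fix the blue's position index and insert $1/n$ for the uniform choice — these are the same bookkeeping. The closing remark about upgrading to equality of full tree laws by induction is not needed here, since the proposition concerns only the one-step offspring law of a blue individual.
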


We denote by $(P_h^x:x\in S)$ the sub-Markov family for $(p^h_{x,y})_{x,y\in S}$ given by~(\ref{def:ph}).
Proposition~\ref{prop35783} entails that the path of the blue individuals under  $\bar \P^{x,\mathrm{blue}}_B$ is  $P_h^x$-distributed.

\begin{proof}
Let $n\in\N$, $y_1,\dots,y_n\in S$ and denote by $\cD_{y_2,\dots,y_n}^{y_1}$ the event that the first generation is constituted by one blue individual in $y_1$ and $n-1$ individuals in $y_2,\dots,y_{n}$ (with the white individuals being ordered lexicographically). One has
\begin{align*}
\bar \IP_B^{x,\mathrm{blue}}(\cD_{y_2,\dots,y_n}^{y_1}) &=  n \, \frac 1{ h(x)}\, d_x(n) \,\prod_{k=1}^{n} p_{x,y_k} \,\sum_{k=1}^n  h(y_k) \, \frac { h(y_1)}{\sum_{k=1}^n h(y_k)}\\
&= \frac {nd _x(n)}{m_x} \, \underbrace{\frac{q_{x,y_1}  h(y_1)}{ h(x)}}_{=p_{x,y_1}^h}\, \prod_{k=2}^{n} p_{x,y_k} .
\end{align*}
\end{proof}

The spine construction of a $B$-biased BMC uses the following decoration procedure.


\begin{construction}[The $B$-biased decoration] \label{const:dec2}
We describe a probability kernel that associates a quasi-path  $w^*=[w]\in D^*$ 
with a distribution $\bar \Gamma^\mathrm{deco}(w^*,\,\cdot\,)$ on the set of ordered coloured trees (possibly with the transience condition  not being satisfied). A $\bar \Gamma^{\text{deco}}(w^*,\,\cdot\,)$-distributed random tree is obtained as follows:
\begin{enumerate}
\item first generate the linear tree (the spine) associated with $w^*=[w]$ according to the following rules: 
 for every $k\in \mathbb Z\cap [ T_\partial(w),  T^\partial(w))$ we add a blue individual with independent $\cU$-distributed identifier $i_k$ at position $w_k$ with predecessor $i_{k-1}$ (and with no predecessor if $k=T_\partial(w)$);
\item second attach to every individual $(p,i,x)$ of the spine with $x\not\in B$ (typically all individuals of the spine except the last one)  independently  with probability $d_x^\mathrm{s.b.}(n)$, $n-1$ immediate white descendants at independent $p(x,\cdot)$-distributed locations (with independent $\cU$-distributed labels) and then attach independent white  BMC($d,p$)'s to each of the latter descendants;
\item third attach to every  individual $(p,i,x)$ of the spine  with $x\in B$ (typically the last individual of the spine) an independent white  BMC($d,p$).
\end{enumerate}
The resulting tree can be conceived uncoloured in which case we refer to the probability kernel by $\Gamma^\mathrm{deco}$.
\end{construction}

\begin{proposition}\label{prop:35982}
Let $x\in S$. When applying the $B$-biased decoration on a $P_{h}$-Markov chain started in $x$, the resulting random tree is $\bar \P_B^{x,\mathrm{blue}}$-distributed. In particular,  $P_h^x$-almost every path enters $B$.
\end{proposition}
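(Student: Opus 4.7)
The plan is to prove the two assertions in order: first establish the ``in particular'' statement that $P_h^x$-almost every path enters $B$, and then use it to verify that Construction~\ref{const:dec2} yields the correct distribution. Establishing the hitting property first is convenient because it ensures that the spine generated in step~1 of the decoration is almost surely finite, so that the output is a bona fide element of $\mathfrak T$ rather than an object with infinite spine.

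For the hitting property I would compute the probability that the $P_h$-chain remains in $B^c$ up to time $N$. Using the explicit form $p^h_{x,y}=\mathbf{1}_{B^c}(x)\,q_{x,y}h(y)/h(x)$, the path probabilities telescope to give $P_h^x(X_n\in B^c\text{ for all }1\le n\le N)=(Q_{B^c}^N h)(x)/h(x)$, where $Q_{B^c}$ denotes the restriction of $Q$ to $B^c$. For $x\in B^c$ the harmonic relation $Qh(x)=h(x)$ splits as $h(x)=\sum_{y\in B}q_{x,y}+(Q_{B^c}h)(x)$, and iterating this identity gives $h(x)=\sum_{k=0}^{N-1}(Q_{B^c}^k Q_B\mathbf{1})(x)+(Q_{B^c}^N h)(x)$ with $Q_B\mathbf{1}(x)=\sum_{y\in B}q_{x,y}$. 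The non-negative summand $(Q_{B^c}^k Q_B\mathbf{1})(x)$ is the expected number of individuals in $\mathcal H_B$ at generation $k+1$, so the infinite series equals $h(x)$ and the remainder $(Q_{B^c}^N h)(x)$ decreases to $0$; hence the $P_h$-chain hits $B$ almost surely.

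To identify the two distributions I would compare them on finite cylinder events describing the coloured first $N$ generations of the tree. Under $\bar{\P}_B^{x,\mathrm{blue}}$ Proposition~\ref{prop35783} gives the one-step branching at a blue vertex at $y\in B^c$: a size-biased number $n$ of offspring is produced, one blue following the transition $p^h$ and $n-1$ white each following $p(x,\cdot)$, and the white subtrees evolve independently as $\mathrm{BMC}(d,p)$'s. At a blue vertex in $B$ the offspring follows the ordinary $\mathrm{BMC}(d,p)$ rule and all children are white. Construction~\ref{const:dec2} implements exactly the same local rule at every spine vertex: step~1 generates the blue spine by iterating the $p^h$-transition, step~2 appends to each non-terminal spine vertex a size-biased family of white siblings together with independent white $\mathrm{BMC}(d,p)$ subtrees rooted at them, and step~3 attaches an independent white $\mathrm{BMC}(d,p)$ at the terminal spine vertex in $B$.

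An induction on generations, which on each side uses the branching/Markov property to reduce the agreement on generation $n+1$ (conditional on generation $n$) to the one-step agreement supplied by Proposition~\ref{prop35783}, then concludes the equality of the two laws. The main technical obstacle is not in any single computation but in aligning the coloured vertices across the three descriptions ($\bar{\P}_B^{x,\mathrm{blue}}$, its one-step reformulation via Proposition~\ref{prop35783}, and the iterated decoration of Construction~\ref{const:dec2}); once this identification is made and the almost-sure finiteness of the spine is in hand, everything else is routine.
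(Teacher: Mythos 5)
Your proof is correct and takes essentially the same route as the paper: the identification of the two laws is reduced to the one-step branching rule supplied by Proposition~\ref{prop35783}, which is precisely what the paper appeals to when it calls the proposition an immediate consequence of the definition of the $B$-biased BMC and Proposition~\ref{prop35783}. The one place where you go beyond the paper's laconic proof is the hitting assertion: the paper states it as an ``in particular'' without argument, whereas you supply a clean self-contained derivation via the telescoping identity $h(x)=\sum_{k=0}^{N-1}(Q_{B^c}^{k}Q_{B}\mathbf 1)(x)+(Q_{B^c}^{N}h)(x)$ together with the observation that the full series already sums to $h(x)$ (being the expected number of $\mathcal H_B$-hits by generation), forcing $(Q_{B^c}^{N}h)(x)\to 0$. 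That computation is right, and it is the substantive content that the paper leaves implicit; note only that it applies for $x\in B^c$ (the case $x\in B$ being trivial since the chain is already in $B$ at time $0$).
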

The proposition is an immediate consequence of the definition of the $B$-biased BMC and Proposition~\ref{prop35783}.

%

For $\mathbf x\in \cI$,  we denote by $ \P_B^{\mathbf x}$ the \emph{$B$-size biased analogue of $\P^\mathbf{x}$} that is given by
$$
\frac{\dd  \P^\mathbf {x}_B}{\dd \P^\mathbf{x}}= \frac 1{\E^{\mathbf x}[\# \cH_{B}]}\,  \# \cH_{B}.
$$
We relate $\bar \P^{x,\mathrm{blue}}_B$ and $\P^x_B$.
To do so we use $B$-colourings as introduced in the following construction.
\begin{construction}[The $B$-biased colouring]
The $B$-biased colouring is a probability kernel from the set $\mathfrak F_B$ to the set $\bar{\mathfrak F}_B$ of blue and white coloured forests. For given $\tr\in\mathfrak F_B$, $\bar \Gamma^{\text{col}}(\tr,\cdot)$ is the distribution obtained by picking an individual in $\cH_B(\tr)$ uniformly at random and marking the individual itself and  its ancestral line  in blue and all other individuals in white.
\end{construction}

\begin{theorem}\label{thm:spine}
For  $x\in S$, we have
$$
\bar \P_B^{(x,\mathrm{blue})}=\P_B^x\otimes \bar \Gamma^\mathrm{col}= P_h^x\otimes \bar \Gamma^\mathrm{deco}.
$$
Moreover, for $\mathbf x \in \cI$, we have
$$
\bar \P^{\mathrm{col}_h(\mathbf{x})}_B = \P_B^\mathbf {x} \otimes \bar \Gamma^{\mathrm{col}},
$$
 where $\mathrm{col}_h(\mathbf{x})$ is the distribution on $\bar\cI$ obtained by a random colouring of $\mathbf x$ that chooses  an element $(i,x)$ of $\mathbf x$ with probability proportional to $h(x)$ and marks it blue and all other entries white. 
\end{theorem}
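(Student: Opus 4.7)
The plan is to establish the three equalities in turn: the middle one is essentially Proposition~\ref{prop:35982}, while the other two are atom-wise density identities on the space of (coloured) ordered forests, which reduce, once the iid $\cU$-label layer (shared by both sides of each equation) is factored out, to comparing combinatorial weights on the underlying genealogical structure.

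For the first equality $\bar\P^{(x,\mathrm{blue})}_B = \P^x_B \otimes \bar\Gamma^\mathrm{col}$, I would fix a coloured forest $\bar{\mathfrak t}$ whose blue spine traces $x = x_0, x_1, \dots, x_{T-1} \in B^c, x_T \in B$, and let $\mathfrak t$ be its uncoloured projection. On the right, size-biasing by $\#\cH_B$ (normaliser $h(x)=\E^x[\#\cH_B]$) and then uniform choice among the $\#\cH_B(\mathfrak t)$ candidate spines give the atomic weight $\P^x(\mathfrak t)/h(x)$. On the left, I would unfold the biased-BMC rule generation by generation: white subtrees and offspring of the terminal blue vertex at $x_T \in B$ contribute the same weight as under $\P^x$ by definition. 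At each blue spine vertex $x_i$ with $i<T$, multiplying the biased offspring weight
\[
\frac{1}{h(x_i)}\, d_{x_i}(n) \prod_{k=1}^n p(x_i, y_k) \,\sum_{k=1}^n h(y_k)
\]
by the conditional probability $h(y_j)/\sum_k h(y_k)$ of marking the $j$-th child blue reproduces the native $\P^x$-weight multiplied by $h(x_{i+1})/h(x_i)$ (the blue child landing at $x_{i+1}$). Telescoping along the spine yields $\prod_{i<T} h(x_{i+1})/h(x_i) = h(x_T)/h(x_0) = 1/h(x)$ because $h\equiv 1$ on $B$, which matches the right-hand side.

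For the third equality I would reduce to the first via the independent-branching property $\P^\mathbf{x} = \bigotimes_{(i,x)\in\mathbf x} \P^x$. The total $\#\cH_B$ splits additively, so $\E^\mathbf{x}[\#\cH_B] = \sum_{(i,x)\in\mathbf x} h(x)$, and the right-hand side again collapses atom-wise to $\P^\mathbf{x}(\mathfrak t)/\sum_m h(x_m)$. On the left, $\mathrm{col}_h(\mathbf x)$ picks the blue initial individual $(i_j, x_j)$ with probability $h(x_j)/\sum_m h(x_m)$, whose subtree then evolves under $\bar\P^{(x_j,\mathrm{blue})}_B$ while the remaining subtrees evolve as independent $\P^{x_\ell}$. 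Applying the first equality to the blue subtree rewrites $\bar\P^{(x_j,\mathrm{blue})}_B(\bar{\mathfrak t}_j)$ as $\P^{x_j}(\mathfrak t_j)/h(x_j)$, cancelling the $h(x_j)$ factor and producing exactly the same weight.

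The delicate part is the first equality: one must maintain clean bookkeeping between the two equivalent descriptions of the biased offspring law (the explicit weights~(\ref{bias-BMC-weights}) composed with the $h(y_j)/\sum h(y_k)$ colour-choice, versus the size-biased offspring with one $p^h$-child given by Proposition~\ref{prop35783}), and isolate the purely algebraic telescoping identity that exactly compensates the $1/h(x)$ coming from the size-biasing on the other side. The label layer is a minor distraction since iid $\cU$-labels appear identically on both sides of each equation, so the verification reduces to the combinatorial weight comparison.
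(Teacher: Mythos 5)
Your proposal takes essentially the same route as the paper: the middle equality is delegated to Proposition~\ref{prop:35982}, and the other two equalities are proved by matching weights, the crux being that the $h$-ratios telescope along the blue spine to $1/h(x)$ (resp.\ $1/\sum_m h(x_m)$), exactly cancelling the size-bias normaliser. The paper formalizes this telescoping as an induction on the truncated generation depth (so that the comparison is made on cylinder events of the possibly infinite trees) and carries the Ulam-Harris ordering bookkeeping explicitly, but the underlying cancellation is the one you identify.
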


When applying a $B$-biased colouring on a $\P_B^{\mathbf{x}}$-distributed forest the locations of the blue individuals (spine) form a $P_h$-Markov chain. Given the spine the conditional distribution of the tree connected to the spine is given by the $B$-biased decoration kernel as consequence of Prop.~\ref{prop:35982}.

\begin{proof}
We note that the equality $\bar \P_B^{(x,\mathrm{blue})}=  P_h^x\otimes \bar \Gamma^\mathrm{deco}$ is just the statement of Prop.~\ref{prop:35982}.
We now prove validity of $\bar \P_B^{(x,\mathrm{blue})}=\P_B^x\otimes \bar \Gamma^\mathrm{col}$. We use Ulam-Harris notation 
and    write $\bar \P_{\mathrm{UH},B}^{(x,\mathrm{blue})}$ and  $ \P_{\mathrm{UH},B}^x$ for the Ulam-Harris variants  of~$\bar \P_B^{(x,\mathrm{blue})}$ and $\P_B^{x}$. More explicitly, $\bar \P_{\mathrm{UH},B}^{(x,\mathrm{blue})}$- and  $\P_{\mathrm{UH},B}^x$-distributed objects are obtained by taking $\bar \P_B^{(x,\mathrm{blue})}$- and $\P_B^{x}$-distributed trees,  enumerating the children of each individual independently and uniformly at random and forgetting about the original labeling.
  

We call an element  $\bar \tr$ of $\IT(S\times\{\mathrm {blue, white}\})$  $n$-feasible if it satisfies the  following:  $\bar \tr$~has at most $n$ generations, the initial generation consists of  one blue individual, every blue individual in the generations $0,\dots,n-1$ outside of $B$ has exactly one blue descendant. Every blue individual in $B$ and all white individuals have only white descendants. Moreover, we denote by $\cT$, resp.\ $\bar \cT$, the canonical random tree on the respective uncoloured and coloured probabiliy spaces and let $\cT^B$ denote a $B$-biased colouring of $\cT$ which is obtained by independently choosing an element of $\cH_B(\cT)$ and marking itself and its ancestoral line in blue. 

We proceed by induction. Suppose we showed already that  for $n\in\N$ one has for $x\in S$ and all $n-1$-feasible $\bar \tr$, $ \P^{ x}_{\mathrm{UH},B}(\cT^B|_{n-1}=\bar\tr)=\bar \P_{\mathrm{UH},B}^{x,\mathrm{blue}}(\bar \cT|_{n-1}=\bar\tr)$. 
We show validity for $n-1$ replaced by $n$.

Suppose that $\bar \tr$ is an $n$-feasible configuration with one initial blue individual in $x\in B^c$ having $n$ descendants in locations $y_1,\dots,y_n$ with the $j$th individual being blue. 
We denote by $\tr$ the uncoloured version of $\bar\tr$ and by  $\Pi_i$ the subtree generated by the $i$th individual   and its descendants. One has
\begin{align*}
 \P_{\mathrm{UH},B}^x(\cT^B|_n=\bar \tr)&=\frac 1{h(x)} \E^x_\mathrm{UH}[\#\cH_B\, \1\{\cT^B|_n=\bar \tr\}] \\
&\stackrel{(a)}= \frac 1{h(x)} d_x(n) \prod_{i=1}^n p_{x,y_i}\,\E^{(y_1,\dots,y_n)}_\mathrm{UH}\Bigl[ \#\cH_B(\Pi_j) \1\{(\Pi_j (\cT))^B|_{n-1}=\Pi_j(\bar  \tr)\} \\
& \hspace{5.6cm} \prod_{i\not= j} \1\{\Pi_i (\cT)|_{n-1}=\Pi_i( \tr)\}\Bigr] \\
&\stackrel{(b)}= \frac 1n  d_x^\mathrm{s.b.}(n)\, p^{\bar h}_{x,y_j}  \frac 1{\bar h(y_j)} \E^{y_j}_\mathrm{UH}\bigl[ \#\cH_B \1\{\cT^B|_{n-1}=\Pi_j(\bar \tr)\}\bigr]\\
& \hspace{5cm} \prod_{i\not= j} p_{x,y_i}\, \P^{y_i}_\mathrm{UH}\bigl(\cT|_{n-1}=\Pi_i( \tr)\bigr)\\
&\stackrel{(c)}= \frac 1n  d_x^\mathrm{s.b.}(n) \,p^{\bar h}_{x,y_j}\,   \bar\P^{y_j,\mathrm{blue}}_{\mathrm{UH},B}\bigl( \bar \cT|_{n-1}=\Pi_j(\bar \tr)\bigr) \\
& \hspace{5cm} \prod_{i\not= j} p_{x,y_i}\, \bar \P^{y_i,\mathrm{white}}_{\mathrm{UH},B}\bigl(\bar \cT|_{n-1}=\Pi_i( \bar \tr)\bigr)\\
& = \bar \P_{\mathrm{UH},B}^{x,\mathrm{blue}}(\bar \cT|_n=\bar \tr).
\end{align*}
Here we used in step $(a)$ that for the $B$-biased colouring an individual in the branch  $\Pi_j(\cT)$ is picked with probability $\#\cH_B(\Pi_j)/\#\cH_B$ and the conditional distribution of the picked individual given that the $j$th branch is chosen is  uniform over $\cH_B(\Pi_j)$.
In step $(b)$ we used that branches of the BMC evolve independently and the definitions of $d_x^{\mathrm{s.b.}}(n)$ and $p^{\bar h}$, and in $(c)$ we used the induction hypothesis. Thus we showed equality for $n$-feasible trees with initial individual in $x\in B^c$. In the case where we start with an individual in $B$ the equality is trivial. Moreover, in the case where $\tr$ is not feasible or does not have an initial individual in $x$ both sides   equal zero.

It remains to consider the case where we start with finitely many individuals. 
Let  ${\mathbf x}=(x_1,\dots,x_m)\in S^m$ be a vector of $m$  individuals. Suppose that $\tr\in \IT(S\times \{\mathrm{blue},\mathrm{white}\})$ has at most $n$ generations and that the vertices of the initial generation agree up to the colouring with $\mathbf x$. Moreover, suppose that the $j$th initial individual of $\tr$ is blue and all others are white. We denote by $\cB_j$ the event that the $j$th initial individual is coloured in blue.
One has
\begin{align*}
 \P_{\mathrm{UH},B}^{\mathbf x}( \cT^B|_n= \mathfrak t)&=\frac 1{\sum_i  h(x_i)}  \E_\mathrm{UH}^{\mathbf x}[\# \cH_B\, \1\{   \cT^B|_n= \mathfrak t\}]\\
&= \frac 1{\sum_i  h(x_i)}  \E_\mathrm{UH}^{\mathbf x}\Bigl[\# \cH_B\, \1_{\cB_j} \1\{   \Pi_j(\cT)^B|_n= \Pi_j(\mathfrak t)\} \\
&\hspace{3cm}\prod_{i\not=j} \1\{   \Pi_i(\cT)^\mathrm{white}|_n= \Pi_i(\mathfrak t)\}\Bigr]\\
&\frac 1{\sum_i h(x_i)}  \E^{\mathbf x}_\mathrm{UH}\Bigl[\# \cH_B(\Pi_j(\cT))  \1\{   \Pi_j(\cT)^B|_n= \Pi_j(\mathfrak t)\} \\
&\hspace{3cm}\prod_{i\not=j} \1\{   \Pi_i(\cT)^\mathrm{white}|_n= \Pi_i(\mathfrak t)\}\Bigr]\\
&=\frac {h(x_j)}{\sum_{i} \bar h(x_i)} \bar \P_{\mathrm{UH},B}^{x_j,\mathrm{blue}}(\bar \cT|_n=\Pi_j(\tr)) \, \prod_{i\not= j} \bar \P_{\mathrm{UH},B}^{x_i,\mathrm{white}}(\bar\cT|_n=\Pi_i(\tr)).
\end{align*}
Note that indeed the initial blue individual is chosen proportional to the $h$-value and conditonal on the coloured initial population the process is a $B$-biased BMC. 
%
\end{proof}

\section{Characterisation in terms of the cone of decorable $P_h$-quasi-processes}\label{sec4}

In this section we characterise branching quasi-processes in terms of decorable $P_h$-quasi-processes.

\begin{definition}
We call a measure $\eta$ on $D^*$ \emph{decorable}, if $\eta\otimes \bar \Gamma^{\mathrm{deco}}$ is supported on $\bar{\mathfrak T}$.
\end{definition}
\begin{notation}
Let $\mathfrak t\in \bar{\mathfrak F}_B$ with the blue vertices forming a linear subtree\footnote{A linear tree is a tree where each individual has either one or zero descendants.} with ordered indices $(i_k)$. Then we denote the quasi-paths of the blue vertices by 
$$
\mathrm{spine}(\mathfrak t) = [(V_{\mathfrak t}(i_k))_k].
$$
\end{notation}

\begin{theorem}\label{theo:alt_main} Let $B$ be a norming region. 
There is a one-to-one isomorphism between the cone of all branching quasi-processes $\Xi$ and the cone of all decorable $P_{ h}$-quasi-processes $\eta$ satisfying 
\begin{align}\label{eq:equiv}
\eta=(\Xi_B\otimes \bar \Gamma^\mathrm{col})\circ \mathrm{spine}^{-1} \text{ \ \ and \ \ } \Xi_B\otimes \bar \Gamma^\mathrm{col} =\eta\otimes \bar \Gamma^{\mathrm{deco}},
\end{align}
where $\Xi_B$ denotes the respective  $B$-biased analogue of $\Xi$, i.e.,
$$
\frac{d\Xi_B}{d\Xi}= \#\cH_B.
$$
In particular, $\Xi_B$ is a finite measure.
\end{theorem}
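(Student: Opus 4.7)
The strategy is to construct each direction of the claimed isomorphism using Theorem~\ref{thm:spine} as the main technical tool and then verify the two constructions are mutual inverses. Throughout I view the spine projection and the decoration kernel as two sides of a single bijective transformation on $B$-biased coloured measures.

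For the forward direction, start with a branching quasi-process $\Xi$. The Markov property (\ref{qp-prop}) yields $\Xi_B(\mathfrak{T}_B)=\int h\,d\mu_B<\infty$, and under $\Xi_B$ the $B$-progeny $\Pi_B$ is distributed as the $\cH_B$-size-biased BMC $\P^{\mu_B}_B$. Theorem~\ref{thm:spine} then identifies
\[
\Xi_B\otimes\bar\Gamma^{\mathrm{col}} = \P^{\mu_B}_B\otimes\bar\Gamma^{\mathrm{col}} = \bar\P_B^{\mathrm{col}_h(\mu_B)} = P_h^{h\mu_B}\otimes\bar\Gamma^{\mathrm{deco}},
\]
where $h\mu_B$ denotes the finite measure $x\mapsto h(x)\mu_B(x)$. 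Defining $\eta:=(\Xi_B\otimes\bar\Gamma^{\mathrm{col}})\circ\spine^{-1}$, the second identity in (\ref{eq:equiv}) is immediate, and decorability of $\eta$ follows since $\eta\otimes\bar\Gamma^{\mathrm{deco}}=\Xi_B\otimes\bar\Gamma^{\mathrm{col}}$ is supported on $\bar{\mathfrak T}$. That $\eta$ is a $P_h$-quasi-process then follows because the spine Markov property holds at the first entrance into any finite $B'\supset B$, inherited from the Markov property of $\Xi$ at $B'$.

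For the backward direction, given a decorable $P_h$-quasi-process $\eta$, I set $\Xi_B$ equal to $\eta\otimes\bar\Gamma^{\mathrm{deco}}$ with colours forgotten, and $d\Xi/d\Xi_B = 1/\#\cH_B$. I then need to verify (\ref{qp-prop}) and $\Xi(\mathfrak T_{B'})<\infty$ for every finite $B'\subset S$. For $B'=B$ the identity $\Xi_B(\Pi_B\in\cdot)=\P^{\mu_B}_B$ is Theorem~\ref{thm:spine} run in reverse: the spine under $\eta\otimes\bar\Gamma^{\mathrm{deco}}$ is $\eta$, which after decoration combined with the $\cH_B$-size-biasing yields the BMC $\P^{\mu_B}_B$ with entrance measure $\mu_B$, and dividing by $\#\cH_B$ recovers (\ref{qp-prop}).

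The main obstacle is extending (\ref{qp-prop}) from the fixed norming region $B$ to \emph{arbitrary} finite $B'\subset S$, as required by Definition~\ref{def:bqp}, together with checking that the construction is independent of the chosen norming region. My plan is to re-run the spine construction with $B\cup B'$ in place of $B$: the $P_h$-quasi-process property of $\eta$ guarantees a first entrance of the spine into $B'$, and applying Theorem~\ref{thm:spine} at that entrance yields the conditional BMC structure on $\Pi_{B'}$. Bijectivity of the correspondence then follows by composing the two constructions and reading off the identities in (\ref{eq:equiv}): $\Xi\mapsto\eta\mapsto\Xi$ is recovered via the second identity, and $\eta\mapsto\Xi\mapsto\eta$ via the first.
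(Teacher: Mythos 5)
Your forward direction contains a genuine gap. The chain of equalities
\[
\Xi_B\otimes\bar\Gamma^{\mathrm{col}} = \P^{\mu_B}_B\otimes\bar\Gamma^{\mathrm{col}} = \bar\P_B^{\mathrm{col}_h(\mu_B)} = P_h^{h\mu_B}\otimes\bar\Gamma^{\mathrm{deco}}
\]
starts with a false step. The Markov property (\ref{qp-prop}) says $\Xi|_{\mathfrak{T}_B}(\Pi_B\in\cdot)=\P^{\mu_B}$, i.e.\ only the pushforward under $\Pi_B$ agrees; it does \emph{not} say $\Xi|_{\mathfrak{T}_B}=\P^{\mu_B}$ as measures on full trees. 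A $\Xi_B$-tree generally has doubly-infinite ancestry before it enters $B$, so the spine under $\Xi_B\otimes\bar\Gamma^{\mathrm{col}}$ is a genuinely infinite quasi-path, while under $\P^{\mu_B}_B\otimes\bar\Gamma^{\mathrm{col}}$ the spine is a single point in $B$ (since $\mu_B$ is supported on $B$ and the $P_h$-chain dies instantly there). The two sides are different measures, and the second identity in (\ref{eq:equiv}) does not follow "immediately"; one needs to push through $\Pi_{B'}$ for finite $B'\supset B$ and take a limit along $B_n\uparrow S$, as the paper does with the $\mathfrak{s}_n$-construction, and then verify that what converges is the stated decoration.

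The backward direction has a more serious omission. You define $\Xi_B=\eta\otimes\bar\Gamma^{\mathrm{deco}}$ (uncoloured) and then set $d\Xi/d\Xi_B=1/\#\cH_B$. This only defines $\Xi$ on $\mathfrak{T}_B$. But Definition~\ref{def:bqp} requires $\Xi$ to be a measure on all of $\mathfrak{T}$ satisfying the Markov property for \emph{every} finite $B'\subset S$, including those $B'$ whose trees may not hit $B$ at all. Your plan to ``re-run the spine construction with $B\cup B'$ in place of $B$'' does not repair this: replacing $B$ changes $h$, hence $P_h$, hence the colouring and decoration kernels, so the resulting objects do not obviously agree with what you built from $B$, and in any case this only addresses $B'\supset B$. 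The missing ingredient is precisely the construction the paper carries out: first define $\Xi^B=\Xi|_{\mathfrak{F}_B}$, then for each finite $B'\supset B$ define the extension $\Xi^{B'}$ by reweighting with $1/\P^{\cH_{B'}}(\mathfrak{F}_B)$ and attaching independent BMCs, prove the consistency $\Xi^{B_m}|_{\mathfrak{F}_{B_n}}=\Xi^{B_n}$ for $n\le m$ using the Markov property of the BMC with respect to the $\sigma$-field $\mathcal G_{B'}$, and take the monotone limit $\Xi=\lim_n\Xi^{B_n}$. Without this step you have not produced a branching quasi-process, only its restriction to $\mathfrak{T}_B$, so the claimed isomorphism is not established.
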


\begin{proof}
1) We start with a $B$-biased version $\Xi_B$ of a branching quasi-process $\Xi$ and show that $\eta:=(\Xi_B\otimes \bar \Gamma^\mathrm{col})\circ \mathrm{spine}^{-1}$ is a $P_{h}$-quasi-process.
For $B'\supset B$ finite one has
\begin{align*}
\eta(\pi_{B'}\in\,\cdot\,)&=\int  \1\{\pi_{B'}(\mathrm{spine})\in \,\cdot\,\}\,   d\Xi_B \otimes  \bar \Gamma^{\text{col}}\\
&= \int  \#\cH_B \,  \bar \Gamma^{\text{col}}(\Pi_{B'}, \mathrm{spine}\in \,\cdot\,)\,  d \Xi  \\
&= \int \int \#\cH_B(\tr)\, \bar \Gamma^{\text{col}}(\tr, \mathrm{spine}\in \,\cdot\,)   \, d \P ^{\mathbf x} (\tr) \,d\mu_{B'}(\mathbf x)\\
 &= \int h(\mathbf x) \int \ \bar \Gamma^{\text{col}}(\tr, \mathrm{spine}\in \,\cdot\,)   \, d \P_B ^{\mathbf x} (\tr) \,d\mu_{B'}(\mathbf x)\\
 &=   \int \sum_{(i,x)\in \mathbf x} h(x) \int \ \bar \Gamma^{\text{col}}(\tr, \mathrm{spine}\in \,\cdot\,)   \, d \P_B ^{x} (\tr) \,d\mu_{B'}(\mathbf x),
\end{align*}
where $\mu_{B'}$ is the $B'$-entrance measure of $\Xi$. By Theorem~\ref{thm:spine}, $\P^x_B\otimes \bar \Gamma^\mathrm{col}=P_h^x\otimes \bar \Gamma^{\mathrm{deco}}$ so that, in particular, the spine is $P_h^x$ distributed under the latter distribution. Hence, we get that
$$
\eta(\pi_{B'}\in\,\cdot\,)= \int P_{ h}^x  \,d\bar \mu_{B'}(x),
$$
where $\bar \mu_{B'}$ is the finite measure on $S$ given by
$$
\bar \mu_{B'}(A)= \int  \sum_{(i,x)\in \mathbf x} h(x)\, \1_A(x) \, d\mu_{B'}(\mathbf x).
$$
We proved that $\eta$ is a $P_{ h}$-quasi-process with $B'$-entrance measure $\bar \mu_ {B'}$. Moreover, since all paths enter $B$, we get that 
$$
\Xi_B(\mathfrak T)= \eta(D^*) =\eta(D^*_B)= \bar \mu_{B'}(B')
$$
is, in particular, finite.

To verify the second equality in~(\ref{eq:equiv}) we fix finite sets $B_n\subset S$ with $B\subset B_1$ and $B_n\uparrow S$ and we define, for a coloured tree $\bar\tr\in \bar{\mathfrak T}_B$ with a unique blue spine hitting $B$,
$$
\mathfrak s_n(\bar\tr)= \text{ the first individual on the spine in $B_n$}. 
$$
Then
\begin{align*}
\int \1\{\Pi_{\mathfrak s_n} \in \,\cdot\,\} \, d\Xi_B\otimes \bar \Gamma^\text{col} &= \int \#\cH_B \,\1\{ \Pi_{\mathfrak s_n} \in \,\cdot\,\}\, d\Xi\otimes \bar \Gamma^\text{col}\\
&= \int \int \#\cH_B \,\1\{ \Pi_{\mathfrak s_n} \in \,\cdot\,\}\, d\P^{\mathbf x}\otimes  \bar \Gamma^\text{col} \ d\mu_{B_n}(\mathbf x)\\
&=\int  \P_B^x\otimes \bar \Gamma^\text{col} (\,\cdot\,) \,d\bar\mu_{B_n}(x) = \int P^x_h\otimes \bar \Gamma^{\mathrm{deco}} (\,\cdot\,) \,d\bar\mu_{B_n}(x)\\
&=\int \bar \Gamma^\mathrm{deco}(\pi_{B_n},\,\cdot\,)\,      d\eta.
\end{align*}
This implies that for arbitrarily fixed finite set $B'\supset B$ one has
\begin{align*}
\int \1\{\Pi_{B'}\in \,\cdot\,\}\, d\Xi_B\otimes \bar \Gamma^\mathrm{col}&=\lim_{n\to\infty} \int \1\{\Pi_{B'}\circ \Pi_{\mathfrak s_n} \in \,\cdot\,\} \, d\Xi_B\otimes \bar \Gamma^\text{col}\\
&=\lim_{n\to\infty}  \int \bar \Gamma^\mathrm{deco}(\pi_{B_n}, \Pi_{B'}\in \,\cdot\,)\,      d\eta\\
&=\int\bar \Gamma^\mathrm{deco}(w,\Pi_{B'}\in \,\cdot\,)\,      d\eta([w]).
\end{align*}
and we showed validity of the second equality in~(\ref{eq:equiv}). In particular, $\eta$ is decorable.

2) It remains to construct  for a decorable   $P_h$-quasi-process~$\eta$  a branching quasi-process $\Xi$ so that its $B$-biased variant satisfies~(\ref{eq:equiv}). We set $\bbar \Xi=\eta\otimes \bar \Gamma^\mathrm{deco}$ and denote by~$\bar \Xi$ the distribution $\bbar \Xi$ after removal of the colouring. 
Next, we verify the two  equalities of~(\ref{eq:equiv}) for $\bar \Xi$ in place of $\Xi_B$.

We start with the second equality.
Let $B',B_1,B_2,\dots$ be  finite subsets of $S$ with $B'\supset B$ and $B_n\uparrow S$.
By definition of the decoration and colouring procedure and Theorem~\ref{thm:spine} one has in terms of the entrance measures $(\bar \mu_{B_n})$ of $\eta$:
\begin{align}\begin{split}\label{eq83256}
\bbar \Xi ( \Pi_{B'}\in \,\cdot\,)&=\lim_{n\to\infty}
((\eta\circ (\pi_{B_n})^{-1}) \otimes \bar \Gamma^{\mathrm{deco}})(\Pi_{B'}\in\,\cdot\, )
\\
&=\lim_{n\to\infty}\int  (P_h^{x}\otimes \bar \Gamma^\mathrm{deco})(\Pi_{B'}\in \,\cdot\,)\, d\bar \mu_{B_n}(x)\\
&
= \lim_{n\to\infty}\int  (\P_B^{x}\otimes \bar \Gamma^\mathrm{col})(\Pi_{B'}\in \,\cdot\,)\, d\bar \mu_{B_n}(x) = ( \bar \Xi\otimes \bar \Gamma^\mathrm{col})(\Pi_{B'}\in \,\cdot\,),
\end{split}\end{align}
where we used that the action of the colouring procedure on $\Pi_{B'}$ depends only on~$\Pi_{B'}$ and that $\bar \Xi(\bar \Pi_{B'}\in\,\cdot\,)=\lim_{n\to\infty} \int  \P_B^{x}(\Pi_{B'}\in \,\cdot\,)\, d\bar \mu_{B_n}(x)$. Thus we showed the second equality. 
The first equality follows immediately since $\bar \Xi\otimes \bar \Gamma^\mathrm{col}=\bbar \Xi =\eta\otimes \bar\Gamma^\mathrm{deco}$.  

It remains to construct a branching-quasi-process $\Xi$ whose $B$-biased variant satisfies~$\Xi_B=\bar \Xi$.
Again using~(\ref{eq83256}) we obtain that, for $B'\supset B$ finite,
$$
\bar \Xi ( \Pi_{B'}\in \,\cdot\,)=\lim_{n\to\infty}  \int\P_B^{x}(\Pi_{B'}\in\,\cdot\,)\,d\bar \mu_{B_n}(x)= \lim_{n\to\infty} \P_B^{\nu_{B'}^n},
$$
where $ \nu_{B'}^n= \int  \P_B^{x} ( V(\cH_{B'})\in\,\cdot\,)\, d\bar \mu_{B_n}(x)$. 
Hence, in particular, $\nu_{B'}^n$  converges in total variation norm to
$$
\nu_{B'}= \bar \Xi (V(\cH_{B'})\in\,\cdot\, )= (\eta\otimes  \bar \Gamma^{\mathrm{deco}})(V(\cH_{B'})\in\,\cdot\,)
$$ and we conclude that
\begin{align}\label{eq894367}
\bar \Xi (\Pi_{B'}\in\,\cdot\, ) = \P_B^{\nu_{B'}}.
\end{align}
At first we consider the measure $\Xi^B$ given by
$$
\frac{d\Xi^B}{d\bar \Xi} =\frac 1{\#\cH_B}
$$
which will later be equal to the branching quasi-process $\Xi$ restricted to $B$.
Using~(\ref{eq894367}) we conclude that  for a measurable set $A$
\begin{align}\begin{split}\label{eq87356}
\Xi^B(\Pi_{B'}\in A)  &= \int \int \frac 1{\#\cH_B} \1_A \, d\P_B^{\mathbf x}\, d\nu_{B'}(\mathbf x)=\int \frac 1{ h(\mathbf x)}  \P^{\mathbf x}(A\cap \mathfrak F_B)\, d\nu_{B'}(\mathbf x)\\
&=\int    \P^{\mathbf x}(A\cap \mathfrak F_B)\, d\mu_{B'}(\mathbf x),
\end{split}\end{align}
where $\mu_{B'}$ is 
 given by
$$
\frac{d\mu_{B'}}{d\nu_{B'}}(\mathbf x) =  \frac 1{ h(\mathbf x)} .
$$
Now we define for every finite set $B'\supset B$ a measure $\Xi^{B'}$ on $\mathfrak T$ via
$$
\Xi^{B'}(A)=\int \frac 1{\P^{\cH_{B'}(\tr)}(\mathfrak F_B)} \int \1_A(\tr^{B'}\cup \tr') \, d\P^{\cH_{B'}(\tr)}(\tr')\,      d\Xi^B(\tr),
$$
where 
$$
\tr^{B'}:=\{(p,i,x)\in \tr: V_\tr(\mathrm{pred}_\tr^j(i))\not \in B' \text{ for } j=1,\dots\}.
$$
Then for $B''\supset B'$ finite, we get with~(\ref{eq87356}) that
\begin{align}\begin{split}\label{eq735672}
\Xi^{B'}(\Pi_{B''}\in A)&= \int \frac 1{\P^{\cH_{B'}(\Pi_{B''}(\tr))}(\mathfrak F_B)}  \int \1_A(\Pi_{B''}(\tr^{B'}\cup \tr')) \, d\P^{\cH_{B'}(\Pi_{B''}(\tr))}(\tr')\,      d\Xi^B(\tr)\\
&=  \int  \int  \frac {\1_{\mathfrak F_B}} {\P^{\cH_{B'}(\tr)}(\mathfrak F_B)}\int \1_A(\tr^{B'}\cup \tr') \, d\P^{\cH_{B'}(\tr)}(\tr')\,      d\P^{\mathbf x} \, d\mu_{B''}(\mathbf x).
\end{split}\end{align}
Now let $\mathcal G_{B'}$ denote the $\sigma$-algebra on $\mathfrak F$ generated  by the mapping $\mathfrak F\ni \mathfrak t\mapsto \mathfrak t^{B'}\in \mathfrak F$ (meaning that one knows all individuals that have no strict predecessor  in $B'$). Using the Markov property  of the branching Markov chain we get that
$$
\E^x[\1_{\mathfrak F_B}|\mathcal G_{B'}]= \IP^{\cH_{B'}}(\mathfrak F_B). 
$$
Note that in the second integral in the second line of~(\ref{eq735672}) all other terms are $\mathcal G_{B'}$-measurable so that we have equality
\begin{align*}
\Xi^{B'}(\Pi_{B''}\in A)&=\int  \int  \1_{\mathfrak F_{B'}}(\tr) \int  \1_A(\tr^{B'}\cup \tr') \, d\P^{\cH_{B'}(\tr)}(\tr')\,      d\P^{\mathbf x}(\tr)\, d\mu_{B''}(\mathbf x)\\
&= \int  \int  \1_{\mathfrak F_{B'}}\,  \1_A \,       d\P^{\mathbf x}\, d\mu_{B''}(\mathbf x).
\end{align*}
Fix an increasing sequence $(B_n)_{n\in\N}$ of finite subsets of $S$ with $B\subset B_1$ and $B_n\uparrow S$.
 The previous display formula  implies that for $1\le n\le m$
$$
\Xi^{B_m}|_{\mathfrak F_{B_n}}=\Xi^{B_n}.
$$
We define $\Xi$ as the monotone limit $\Xi=\lim_{n\to\infty} \Xi^{B_n}$ and note that for $B'\subset S$ finite, we get that
$$
\Xi|_{\mathfrak F_{B'}} (\Pi_{B'}\in\,\cdot\,) = \Xi^{B'} (\Pi_{B'}\in\,\cdot\,) = \IP^{\mu_{B'}}
$$
so that $\Xi$ is a branching quasi-process. In particular, $\Xi^B=\Xi|_{\mathfrak F_B}$ and recalling the definition of $\Xi^B$ we conclude  that $\Xi_B$ is the $B$-biased version of $\Xi$.
\end{proof}

Next, we relate the occupation measure of the $P_h$-quasi-process $\eta$ to the occupation measure of the related branching quasi-process $\Xi$.

\begin{theorem}\label{thm:35676} Let $B$ be a norming region, let  $\eta$ be a $P_h$-quasi-process and denote by~$\mu$ its occupation measure given by
\begin{align}\label{eq_rel}
\mu(x)= \int \sum_{k\in\Z} \1_{\{x\}} (w_k) \,d\eta([w]).
\end{align} Then the in Theorem~\ref{theo:alt_main} associated branching quasi-process $\Xi$  has occupation measure~$\nu$ given by
$$
\nu(x)= \1_{B^c}(x) \,\frac 1{h(x)}\, \mu (x)+\sum_{z\in B} \mu(z) g(z,x).
$$
\end{theorem}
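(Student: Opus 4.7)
The plan is to compute $\nu(x)$ by combining the Markov property of $\Xi$ at an enlarged hitting set $B'\supset B$ with the identification of $\Xi$'s $B'$-entrance measure in terms of $\eta$, and then a first-passage decomposition of the Green's function $g$.

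First, fix $x\in S$ and choose a finite $B'\subset S$ with $B\subset B'$ and $x\in B'$ (for instance $B'=B$ if $x\in B$ and $B'=B\cup\{x\}$ if $x\in B^c$). Any individual located at $x$ in a realisation of $\Xi$ lies in $\Pi_{B'}$—its ancestral line meets $B'$, at latest at itself, hence is dominated by an element of $\cH_{B'}$—and trees missing $B'$ contribute nothing because $x\in B'$. The defining Markov property $\Xi|_{\mathfrak T_{B'}}(\Pi_{B'}\in\,\cdot\,)=\P^{\mu_{B'}}$ combined with the linearity of $\P^{\mu_{B'}}$-expectations of occupation functionals in the entrance measure therefore yields
\[
  \nu(x)=\sum_{y\in B'}\alpha_{B'}(y)\,g(y,x),\qquad \alpha_{B'}(y):=\int \mathbf x(y)\,d\mu_{B'}(\mathbf x).
\]
The computation in part 1) of the proof of Theorem~\ref{theo:alt_main} identifies the $B'$-entrance measure of $\eta$ on $S$ as $\mu_{B'}^\eta(y)=h(y)\,\alpha_{B'}(y)$ for $y\in B'$, so that
\[
  \nu(x)=\sum_{y\in B'}\mu_{B'}^\eta(y)\,\frac{g(y,x)}{h(y)}.
\]

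The central input is a first-passage decomposition of $g$. Writing $G_B:=\sum_{n\ge 0}(\1_{B^c}Q)^n$, one obtains
\[
  g(y,x)=\1_{B^c}(x)\,G_B(y,x)+\sum_{z\in B}G_B(y,z)\,g(z,x),\qquad y,x\in S,
\]
by splitting, for each gen-$n$ individual at $x$, whether all of its strict ancestors stay in $B^c$, and, if not, summing over the earliest generation and location at which an ancestor hits $B$. Applying the standard $h$-transform identities $G_B(y,x)=h(y)\,g^h(y,x)/h(x)$ for $x\in B^c$ and $G_B(y,z)=h(y)\,\psi^h(y,z)$ for $z\in B$, where $g^h$ is the Green's function of $P_h$ and $\psi^h(y,\cdot)$ its $B$-hitting distribution, converts this into
\[
  \frac{g(y,x)}{h(y)}=\1_{B^c}(x)\,\frac{g^h(y,x)}{h(x)}+\sum_{z\in B}\psi^h(y,z)\,g(z,x).
\]

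Substituting into the expression for $\nu(x)$ and interchanging summations, the proof reduces to the occupation-from-entrance identities for the $P_h$-quasi-process $\eta$:
\[
  \mu(x)=\sum_{y\in B'}\mu_{B'}^\eta(y)\,g^h(y,x)\ (x\in B'),\qquad \mu(z)=\sum_{y\in B'}\mu_{B'}^\eta(y)\,\psi^h(y,z)\ (z\in B).
\]
Both follow from the Markov property of a $P_h$-quasi-process applied at the first $B'$-entry, noting additionally that $\mu|_B=\mu_B^\eta$ since $P_h$ terminates at its first visit to $B$. This produces
\[
  \nu(x)=\1_{B^c}(x)\,\frac{\mu(x)}{h(x)}+\sum_{z\in B}\mu(z)\,g(z,x),
\]
which is the claim. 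The main obstacle is the first-passage identity for $g$: although it is in essence the classical decomposition by first $B$-hit, some care is needed since $Q$ is not sub-Markovian and the relevant sums are only guaranteed to be well-defined by the overall finiteness assumption on the Green's function; everything else in the argument is a formal manipulation of sums once the identity and its $h$-transformed version are in place.
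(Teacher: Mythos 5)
Your proof is correct, but it takes a genuinely different route from the paper. The paper's argument decomposes the occupation at $x$ directly at the level of the tree: it splits $\nu(x)$ into the contribution from $\mathfrak t^B$ (individuals whose whole ancestral line, including themselves, avoids $B$) and the contribution from $\Pi_B(\mathfrak t)$, and then uses the full spine identity $\Xi_B\otimes\bar\Gamma^{\mathrm{col}}=\eta\otimes\bar\Gamma^{\mathrm{deco}}$ of Theorem~\ref{theo:alt_main}: for the first part, the Markov property at $\mathfrak t^B$ and the observation that a vertex $i$ is coloured blue with conditional probability $\#\cH_B(\Pi_i)/\#\cH_B$ convert the sum into the occupation of the spine, giving $\mu(x)/h(x)$ on $B^c$; for the second part, the decoration of the single spine vertex in $B$ is a BMC started there, and $\eta\circ H_B^{-1}=\mu|_B$ gives $\sum_{z\in B}\mu(z)g(z,x)$. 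You instead work entirely at the level of occupation and entrance measures: apply the defining Markov property of $\Xi$ at an enlarged $B'\ni x$ to get $\nu(x)=\sum_{y\in B'}\alpha_{B'}(y)\,g(y,x)$, import from Theorem~\ref{theo:alt_main} only the one fact $\bar\mu_{B'}(y)=h(y)\alpha_{B'}(y)$ relating the entrance intensities of $\Xi$ and $\eta$, and then reduce to standard occupation-from-entrance identities for the $P_h$-quasi-process via a first-passage decomposition of $g$ at $B$ and the $h$-transform telescoping of $G_B:=\sum_{n\ge0}(\1_{B^c}Q)^n$. In effect you are re-deriving the content of Proposition~\ref{prop8357} (your first-passage identity for $g$, after pulling the $h$-transform through, is exactly that proposition) and then running the same calculation that the paper uses for the scalar-case Claim~2 in the proof of Theorem~\ref{thm:main1}, but now for the branching quasi-process directly. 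This buys a computation that never touches the colouring kernel once the entrance-measure link is in place, whereas the paper's tree decomposition stays closer to the probabilistic structure and makes transparent why the $B^c$-part of $\nu$ is the spine occupation divided by $h$. The steps you flag as requiring care — well-definedness of the first-passage sums and the $h$-transform identities $G_B(y,x)=h(y)g^h(y,x)/h(x)$ and $G_B(y,z)=h(y)\psi^h(y,z)$ for $z\in B$ — all go through under the paper's standing finiteness assumption on $g$ and the definition $p^h_{x,y}=\1_{B^c}(x)q_{x,y}h(y)/h(x)$, so the argument is complete.
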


\begin{proof}For a forest $\mathfrak t$ we write $\mathfrak t^B$ for the subforest
$$
\mathfrak t^B=\{(p,i,x)\in \mathfrak t: V_{\mathfrak t}(\pred_{\mathfrak t}^j(i))\not\in B\text{ for }j=0,\dots\}.
$$
Note that the notion is slightly different to the one used in the proof of Theorem~\ref{theo:alt_main}.
We have for $x\in S$
\begin{align*}
\nu(x)&= \int \sum_{i\in \cP_{\mathfrak t}} \1_{\{x\}} (V_{\mathfrak t}(i))\, d\Xi(\mathfrak t)\\
&=\underbrace{\int \sum_{i\in \cP_{\mathfrak t^B}} \1_{\{x\}} (V_{\mathfrak t}(i))\, d\Xi(\mathfrak t)}_{=I}+\underbrace{\int \sum_{i\in \cP_{\Pi_B(\mathfrak t)}} \1_{\{x\}} (V_{\mathfrak t}(i))\, d\Xi(\mathfrak t)}_{=:II}.
\end{align*}
Obviously, expression I vanishes if $x$ is in $B$. Suppose now that $x$ is in $B^c$. Then using the Markov property for the branching quasi-process yields that
\begin{align*}
I&= \frac 1{h(x)} \int \sum_{i\in \cP_{\mathfrak t^B}} \1_{\{x\}} (V_{\mathfrak t}(i))\, \#\cH_B(\Pi_i(\mathfrak t))\, d\Xi(\mathfrak t)\\
&= \frac 1{h(x)}\int \sum_{i\in \cP_{\mathfrak t^B}} \1_{\{x\}} (V_{\mathfrak t}(i))\, \frac{\#\cH_B(\Pi_i(\mathfrak t))}{\#\cH_B}\, d\Xi_B(\mathfrak t)\\
&=\frac 1{h(x)} \int \sum_{i\text{ is a blue} \atop \text{indiv.\ of } \cP_{\bar{\mathfrak t}}} \1_{\{x\}} (V_{\mathfrak t}(i)) \, d\Xi_B\otimes \bar \Gamma^\mathrm{col}(\bar{\mathfrak t}),
\end{align*}
where we used that a vertex $i$ is coloured blue with conditional probability $\frac{\#\cH_B(\Pi_i(\mathfrak t))}{\#\cH_B(\mathfrak t)}$. Together with~(\ref{theo:alt_main}) we get that
$$
I=\frac 1{h(x)} \int  \sum_k \1_{\{x\}}(w_k) \,d\eta([w])=\frac 1{h(x)}\mu(x).
$$
Next, consider II for general $x\in S$. Again we use Theorem~(\ref{theo:alt_main}) and conclude that
\begin{align*}
II=\int  \sum_{i\in \cP_{\Pi_B}} \1_{\{x\}} (V (i))\frac 1{\#\cH_B} \,d \Xi_B=\int  \sum_{i\in \cP_{\Pi_{\mathfrak s }}} \1_{\{x\}} (V(i)) \,d \Xi_B \otimes \bar \Gamma^\mathrm{col}, 
\end{align*}
where $\mathfrak s(\bar{\mathfrak t})$ denotes the unique blue vertex in the coloured graph $\bar{\mathfrak t}$ that lies in $B$. Hence, using that $\Xi_B\otimes \bar \Gamma^\mathrm{col} = \eta\otimes \bar \Gamma^\mathrm{deco}$, that the decoration of an individual in $B$ just returns the branching Markov chain started in a single individual and that $\eta\circ H_B^{-1}=\mu|_B$ we get that
$$
II= \int  \sum_{i\in \cP_{\mathfrak t}} \1_{\{x\}} (V_{\mathfrak t}(i)) \, \Gamma^{\mathrm{deco}}(H_B, d\mathfrak t) \,d\eta= \sum_{z\in B} \mu(z) g(z,x).
$$
\end{proof}

\section{Potential theory for general non-negative matrices}\label{sec:potential_theory}
 In this section we develop potential theory for non-negative matrices $Q:S\times S\to[0,\infty)$ with finite Green's function
 $$
 G=\sum_{n=0}^\infty Q^n.
 $$
 We will work with  one-sided Kuznetsov measures. These are measures on 
$$
\cW=\{(x_n)_{n\in -\N_0}\in (S\cup\{\partial\})^{-\N_0}: \exists \alpha \in \N_0\cup\{-\infty\}\text{ with } x_n\in S \Leftrightarrow \alpha\le n\}
$$
endowed with the product $\sigma$-field.
Moreover, we associate with $Q$ for each $n\in\N_0$ and $x\in S$ the measure $\cQ^x_n$ on $S^{n+1}$ given by
$$
\cQ_n^x(\{(x_0,\dots,x_n)\})=\1_{\{x=x_0\}} q_{x_0,x_1} \ldots q_{x_{n-1},x_n}\qquad (x_0,\dots,x_n\in S).
$$

\begin{definition}Let $(B_n)$ be a sequence of finite subsets of $S$ with $B_n\uparrow S$.
\begin{enumerate}\item
A locally finite measure $\rho$ on $S$ is called \emph{$Q$-excessive} (or \emph{$Q$-subharmonic}),  if for every $x\in S$,
$$
 \rho Q(x)\le\rho(x).
$$
\item Finite measures $(\bar\mu_{B_n}:n\in\N)$ on $S$ are called \emph{consistent entrance family for $Q$} along $(B_n)$ if for every $n\in\N$ and $y\in S$
$$
\bar \mu_{B_n}(y)=\1_{B_n}(y)  \sum_{x\in S} \bar\mu_{B_{n+1}}(x) \sum_{m\in\N_0}\cQ_m^x(X_m=y, X_0,\dots,X_{m-1}\not \in B_n).
$$
\item For an excessive measure $\nu$ we call $\nu^\mathrm{pot}=\nu(\mathbb I-Q)$ the \emph{purely excessive} or \emph{potential measure} and $\nu^\mathrm{inv}= \nu -\nu^\mathrm{pot} G$ the \emph{invariant measure} associated with $\nu$.
\item A measure $\cK^\nu$ on $\cW$ is called (one-sided)  \emph{Kuznetsov measure} associated with an excessive measure $\nu$, if for every $n\in\N$ and $x_{-n},\dots,x_0\in S$
$$
\cK^\nu(\{(\ldots,\partial,x_{-n},\dots,x_0)\})= \nu^\mathrm{pot}_{x_{-n}} q_{x_{-n},x_{-n+1}}\ldots q_{x_{-1},x_0}
$$
and
$$
\cK^\nu\Bigl(\prod_{m\in \Z\cap(-\infty, -n-1]} (S\cup \{\partial\}) \times \{(x_{-n},\dots,x_0)\}\Bigr)=\nu_{x_{-n}} q_{x_{-n},x_{-n+1}}\ldots q_{x_{-1},x_0}.
$$
\item The matrix $(\hat p_{x,y})_{x,y\in S}$ given by
$$
\hat p_{x,y}= \frac 1{\nu(x)} \nu(y) q_{y,x} 
$$
is substochastic and the respective Markov family $(\hat P^x: x\in S)$ is called $\nu$-adjoint Markov chain.
\end{enumerate}
\end{definition}

\begin{remark}\begin{enumerate}\item
If $Q$ is the intensity operator for a branching Markov chain (as introduced in~(\ref{intensity_operator})) we can rephrase the condition of a consistent entrance family as follows: a family  $(\bar \mu_{B_n})_{n\in\N}$ of finite measures on $S$ is a consistent entrance family if  
$$
\bar \mu_{B_n}(y)= \int \E^x[\#(\cH_{B_n}\cap([0,1]\times \{y\})] \,d \bar \mu_{B_{n+1}}(x).
$$
\item The one-sided Kuznetsov measure plays a similar role as the Kuznetsov measure in the classical theory. However we should stress that even in the case where~$Q$ is substochastic, the definition does not coincide with the original one. 
\end{enumerate}
\end{remark}

\begin{proposition}Let $\nu$ be an excessive measure.
\begin{enumerate}\item
The associated one-sided Kuznetsov measure $\cK^\nu$ is the image measure of $\hat P^{\nu}$ under time-reversal.
\item The invariant measure $\nu^\mathrm{inv}$ is $Q$-invariant and we have
$$
\nu= \nu^\mathrm{inv}+\nu^\mathrm{pot} G.
$$
\end{enumerate}
\end{proposition}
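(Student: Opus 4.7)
Both items are proved by finite-dimensional computations combined with standard convergence arguments, with no additional idea required beyond the definitions.

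For (1) I would evaluate the cylinder probabilities of $\hat P^\nu$, reverse time, and compare with the two defining formulas of $\mathcal{K}^\nu$. Under $\hat P^\nu$ the cylinder $\{Y_0=x_0,\dots,Y_n=x_n\}$ carries probability $\nu(x_0)\,\hat p_{x_0,x_1}\cdots \hat p_{x_{n-1},x_n}$, and substituting $\hat p_{x,y}=\nu(y)q_{y,x}/\nu(x)$ the $\nu(x_i)$-factors telescope to give
\[
\nu(x_n)\,q_{x_n,x_{n-1}}\cdots q_{x_1,x_0}.
\]
Relabeling $k\mapsto -k$, this is exactly the second display in the definition of $\mathcal{K}^\nu$. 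For the first display I would multiply by the absorption probability $1-\sum_{y}\hat p_{x_n,y}=\nu^{\mathrm{pot}}(x_n)/\nu(x_n)$ corresponding to the chain dying at step $n+1$; this cancels the prefactor $\nu(x_n)$ and leaves $\nu^{\mathrm{pot}}(x_n)\,q_{x_n,x_{n-1}}\cdots q_{x_1,x_0}$, matching the first display after reversal. Since the two given cylinder shapes exhaust the generators of the product $\sigma$-field on $\mathcal{W}$, this identifies the two measures.

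For (2) I would iterate the identity $\nu=\nu^{\mathrm{pot}}+\nu Q$ (immediate from $\nu^{\mathrm{pot}}=\nu(\mathbb{I}-Q)$, with non-negativity ensured by excessivity) to obtain
\[
\nu \;=\; \sum_{k=0}^{n-1} \nu^{\mathrm{pot}} Q^k \;+\; \nu Q^n.
\]
Since $Q$ has non-negative entries and $\nu Q\le\nu$, the sequence $(\nu Q^n)$ is pointwise decreasing while the partial sums on the left are monotone increasing; passing $n\to\infty$ by monotone convergence yields $\nu=\nu^{\mathrm{pot}}G+\nu_\infty$ with $\nu_\infty=\lim_n\nu Q^n$, and hence $\nu_\infty=\nu^{\mathrm{inv}}$ by definition. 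Invariance of $\nu^{\mathrm{inv}}$ then reduces to the pointwise identity
\[
\nu^{\mathrm{inv}} Q(y) \;=\; \lim_{n\to\infty}\sum_{x\in S} \nu Q^n(x)\,q_{x,y} \;=\; \lim_{n\to\infty}\nu Q^{n+1}(y) \;=\; \nu^{\mathrm{inv}}(y),
\]
where the interchange of sum and limit is justified by dominated convergence with majorant $\nu Q(x)q_{x,y}$, whose $x$-sum equals $\nu Q^2(y)\le\nu(y)<\infty$ by local finiteness of $\nu$.

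The only delicate point I expect is the bookkeeping in (1): one must match the encoding of $\mathcal{W}$ (sequences on $-\mathbb{N}_0$ with birth time $\alpha$ and cemetery state $\partial$) with the lifetime of the $\hat P^\nu$-chain and verify that the two families of cylinders do generate the product $\sigma$-field. The analytic content of (2) is the classical Riesz decomposition of an excessive measure and requires only care in applying monotone/dominated convergence at the correct step.
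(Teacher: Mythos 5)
Your proof is correct and matches the paper's argument: part (1) is the same direct computation of cylinder probabilities under $\hat P^\nu$ with the telescoping of $\nu$-factors and the killing probability $\nu^\mathrm{pot}(x)/\nu(x)$, and part (2) is the standard Riesz decomposition, which the paper simply declares straightforward and leaves to the reader.
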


\begin{proof}We  verify 1. For $n\in\N_0$ and $x_{-n},\dots,x_0\in S$ one has
\begin{align*}
\hat P^\nu (X_0=x_0,\dots, X_n=x_{-n})= \nu_{x_0} \hat p_{x_0,x_{-1}}\ldots \hat p_{x_{-n+1},x_{-n}}= \nu _{x_{-n}} q_{x_{-n},x_{-n+1}}\ldots q_{x_{-1},x_0}
\end{align*}
and
\begin{align*}
\hat P^\nu (X_0=x_0,\dots, X_n=x_{-n},X_{n+1}=\partial)&=  \nu_{x_0} \hat p_{x_0,x_{-1}}\ldots \hat p_{x_{-n+1},x_{-n}} \lambda_{x_{-n}}\\
&=\lambda_{x_{-n}}\nu _{x_{-n}} q_{x_{-n},x_{-n+1}}\ldots q_{x_{-1},x_0}\\
&= \nu^\mathrm{pot}_{x_{-n}} q_{x_{-n},x_{-n+1}}\ldots q_{x_{-1},x_0},
\end{align*}
where $\lambda_x=1-\sum_{y\in S} \hat p_{x,y}=1- \frac 1{\nu_x} \sum_{y\in S} \nu_y \,p_{y,x}$ is the probability that the $\hat P$-chain dies in the state $x\in S$. 
The verification of 2.\ is straight-forward.
\end{proof}

Every one-sided  Kuznetsov measure satisfies a Markov-property, see also \cite[p.~163]{Kem76}.

\begin{lemma}[Markov property] \label{le:MP} Let $\nu$ be a $Q$-excessive measure. For a measurable set $A$ in $\cW$, $n\in\N$, $B\subset S^{n+1}$ and $x\in S$ one has
$$
\cK^\nu((X_{m-n})_{m\in-\N_0}\in A, X_{-n}=x, (X_{-n},\dots X_0)\in B)= \cK^\nu(A \cap\{X_{0}=x\})\, \cQ^x_n(B).
$$
\end{lemma}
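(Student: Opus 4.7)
The strategy is to reduce to the ordinary Markov property of the $\nu$-adjoint chain via the time-reversal representation established in the preceding proposition. I realise $\cK^\nu$ as the law of $(X_m)_{m\in-\N_0}$ with $X_m:=\hat X_{-m}$, where $(\hat X_k)_{k\in\N_0}$ has law $\hat P^\nu$ (and is sent to $\partial$ after its death time). Under this identification the three factors in the event translate respectively into the forward-tail event $\{(\hat X_{n+j})_{j\ge 0}\in A'\}$ (with $A'$ denoting $A$ read in forward $\hat X$-indexing), the point condition $\{\hat X_n=x\}$, and the segment event $\{(\hat X_n,\hat X_{n-1},\dots,\hat X_0)\in B\}$ on the initial piece of length $n+1$.

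Splitting the left-hand side into the countable disjoint union over tuples $(x,y_1,\dots,y_n)\in B$ and invoking the ordinary Markov property of $\hat P^\nu$ at time $n$ --- so that given $\hat X_n=x$ the tail has law $\hat P^x$ and is independent of the initial segment --- I would obtain
$$
\text{LHS}=\hat P^x\bigl((\hat X_j)_{j\ge 0}\in A'\bigr)\,\sum_{(x,y_1,\dots,y_n)\in B}\hat P^\nu(\hat X_0=y_n,\hat X_1=y_{n-1},\dots,\hat X_n=x).
$$
The detailed-balance identity $\nu_a\hat p_{a,b}=\nu_b q_{b,a}$ is immediate from the definition of $\hat p$, and iterating it telescopes the summand into $\nu_x q_{x,y_1}q_{y_1,y_2}\cdots q_{y_{n-1},y_n}$, so that the whole sum equals $\nu_x\,\cQ^x_n(B)$ by the definition of $\cQ^x_n$. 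The remaining prefactor $\nu_x\hat P^x((\hat X_j)_{j\ge 0}\in A')$ equals $\hat P^\nu(A'\cap\{\hat X_0=x\})$ and hence, by time-reversal applied at time $0$, $\cK^\nu(A\cap\{X_0=x\})$. Multiplying the pieces gives the claim.

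The main obstacle is purely notational: one must carefully track how events on $\cW$ translate into forward-time events on $(\hat X_k)$, and that the tuple in $B$, which is read with backward indexing $(-n,\dots,0)$, reverses to the forward $\hat X$-indexing under $X_m=\hat X_{-m}$. No further probabilistic input is needed beyond the Markov property of $\hat P^\nu$ and the defining relation of $\hat p$. A fully equivalent but more pedestrian route would verify the identity directly on the $\pi$-system of cylinders in $\cW$ determined by the two defining formulas for $\cK^\nu$, with $\sigma$-finiteness handled by partitioning $\cW$ according to the birth time $\alpha$; the arithmetic is identical.
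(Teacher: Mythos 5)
Your proof is correct, but it takes a genuinely different route from the paper's. The paper verifies the identity directly: it takes cylinder sets $A=\{X_{-k}=x_{-k},\dots,X_0=x_0\}$ (and the analogous sets with $x_{-k}=\partial$) together with a singleton $B=\{(y_0,\dots,y_n)\}$, plugs in the two defining formulas for $\cK^\nu$, observes that both sides reduce to $\nu_{x_{-k}}q_{x_{-k},x_{-k+1}}\cdots q_{x_{-1},x_0}\1_{\{x_0=x=y_0\}}q_{y_0,y_1}\cdots q_{y_{n-1},y_n}$, and then extends by the $\pi$-system/Dynkin argument. You instead leverage the preceding proposition, which represents $\cK^\nu$ as the image of $\hat P^\nu$ under time-reversal, and deduce the claim from the ordinary Markov property of the adjoint chain at time $n$ together with the detailed-balance identity $\nu_a\hat p_{a,b}=\nu_b q_{b,a}$. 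Both approaches are sound; yours is conceptually cleaner in that it explains \emph{why} the Markov property holds (it is inherited from the adjoint chain), whereas the paper's is more self-contained and does not presuppose the time-reversal proposition. You even flag the cylinder verification as the "pedestrian route" at the end, which is precisely what the paper does. One small caveat worth keeping in mind: since $\nu$ may be $\sigma$-finite rather than finite, $\hat P^\nu$ is a $\sigma$-finite measure, but the Markov-property computation still goes through by Fubini on the countable state space, as you implicitly use.
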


\begin{proof}
First  consider the sets  $A=\{X_{-k}=x_{-k},\dots,X_0=x_0\}\subset \cW$ and $B=\{(y_0,\dots,y_m)\}$ with $k\in\N$ and $x_{-k},\dots,x_0,y_0,\dots,y_n\in S$. For these sets one immediately gets that
\begin{align*}
\cK^\nu((X_{m-n}&)_{m\in-\N_0}\in A, X_{-n}=x, (X_{-n},\dots X_0)\in B)\\
&= \nu_{x_{-k}} q_{x_{-k},x_{-k+1}} \ldots q_{x_{-1},x_0} \1_{\{x_0=x=y_0\}} q_{y_0,y_1}\ldots q_{y_{n-1},y_n}\\
&= \cK^\nu( A\cap\{X_{0}=x\}) \,\cQ^x_n(B).
\end{align*}
In complete analogy one obtains the formula in the case where $x_{-k}=\partial$ instead of $x_{-k}\in S$. The sets form a $\cap$-stable generator and it thus follow the result by standard arguments.
\end{proof}
For a finite subset $B'\subset S$ we set
$$
Q^{B'}:B'\times B'\to [0,\infty),  (x,y)\mapsto \sum_{n=1}^\infty \cQ_n^x(T_{B'}^*=n,X_n=y),
$$
where $T_{B'}^*$ denotes the first return time to $B'$.

\begin{theorem}\label{thm:main1}
There are isomorphisms between the cones of all
\begin{enumerate}
\item $Q$-excessive measures $\nu$,
\item $P_h$-excessive measures $\mu$,
\item $P_h$-quasi-processes $\eta$ and
\item the consistent entrance families along $(B_n)$, $(\bar \mu_{B_n}:n\in\N)$,
\end{enumerate}
that satisfy
\begin{enumerate}
\item[(i)] $\eta=\cK^\nu|_{\{T_B=0\}}\circ [\,\cdot\,]^{-1}$, where $[\cdot]:D\to D^*$ is the canonical embedding,
\item[(ii)] $\displaystyle{ \nu(x)=\1_{B^c} (x) \frac{\mu(x)}{h(x)}+ \sum_{z\in B} \mu(z) g(z,x)}$ and
\item[(iii)] $\mu$ is the occupation measure of $\eta$, i.e., $\displaystyle{ \mu(x)=\int \sum_{n\in\Z} \1_{\{x\}}(w_n)\, d\eta([(w_n)])}$,
\item [(iv)]  $\displaystyle{\bar \mu_{B_n}(x)= \cK^\nu|_{\{T_{B_n}=0\}}(X_0=x)}$ \ and, for $n\in\N$ with $B\subset B_n$, \ $\displaystyle{\bar \mu_{B_n}(x)= \frac 1{h(x)} \eta(H_{B_n}=x)}$,
\item[(v)] for $n\in\N$, $\nu|_{B_n}=\bar \mu_{B_n} G|_{B_n\times B_n}$ and  $\bar \mu_{B_n}=\nu|_{B_n}(\II-Q^{B_n})$.
\end{enumerate}
\end{theorem}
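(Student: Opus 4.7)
The plan is to build the four cone isomorphisms by pivoting through the sub-Markov kernel $P_h$, to which the classical discrete-time Kuznetsov theory applies directly. The central structural facts are that $p^h_{x,y}=\1_{B^c}(x)\,q_{x,y}h(y)/h(x)$ is sub-stochastic with paths forward-absorbed at $B$ and finite Green's function, and that the $h$-transform identity $h(x) p^h_{x,y} = q_{x,y} h(y)$ on $B^c$ converts $Q$-weights along excursions outside $B$ into $P_h$-weights. The bijection (1)$\leftrightarrow$(2) via (ii) is then purely algebraic: given a $Q$-excessive $\nu$, define $\mu(x) = h(x)\nu(x)$ on $B^c$ and choose $\mu|_B$ so that the identity in (ii) holds on $B$ (a solvable linear system since $h\equiv 1$ on $B$); using $Qh=h$ on $B^c$ and $gQ = G-I$, a short computation turns $\nu Q\le \nu$ into $\mu P_h \le \mu$ and conversely.

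For (2)$\leftrightarrow$(3) with (iii), the standard one-sided Kuznetsov construction applies with $P_h$ in the role of $Q$: to a $P_h$-excessive $\mu$ one associates the measure on $\cW$ given by the formulas of the definition above (now for $P_h$ and $\mu$) and takes its image under $[\cdot]:D\to D^*$ to obtain $\eta$. The quasi-process Markov property follows from the $P_h$-analogue of Lemma~\ref{le:MP} after quotienting by time shift. The occupation measure of $\eta$ equals $\mu$ by direct computation, which yields injectivity; local finiteness on finite $B'\subset S$ follows from Proposition~\ref{prop:35982} since $P_h$-paths enter $B$ in finite forward time almost surely.

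To establish (i), I would write $\cK^\nu|_{\{T_B=0\}}$ out on cylinders: on the set of paths $(\ldots,x_{-n},\ldots,x_{-1},x_0)$ with $x_0\in B$ and $x_{-n+1},\ldots,x_{-1}\in B^c$ its weight is $\nu^{\mathrm{pot}}_{x_{-n}}q_{x_{-n},x_{-n+1}}\cdots q_{x_{-1},x_0}$; after time-reversal and substitution $\mu=h\nu$ on $B^c$ via the $h$-transform identity, this becomes the Kuznetsov weight of the corresponding $P_h$-path, and the quotient by time shift identifies the resulting measure with the $\eta$ of the previous step. For (iv) and (v), I set $\bar \mu_{B_n}(x):=\cK^\nu|_{\{T_{B_n}=0\}}(X_0=x)$; the consistency relation of the entrance family follows from Lemma~\ref{le:MP} by conditioning on the last time strictly before $T_{B_n}$ at which the path lies in $B_{n+1}$. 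The identities $\nu|_{B_n}=\bar\mu_{B_n}G|_{B_n\times B_n}$ and its inverse follow from $\nu = \nu^{\mathrm{inv}} + \nu^{\mathrm{pot}}G$ plus summation over excursions to and from $B_n$, while $\bar\mu_{B_n}(x) = \eta(H_{B_n}=x)/h(x)$ is immediate from the $h$-transform identification in (i).

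The main technical obstacle I expect is the treatment of the invariant part $\nu^{\mathrm{inv}}$ in (i): paths of $\cK^\nu$ extending back to $-\infty$ correspond to this component, and one has to verify that after time-reversal and the $h$-transform they assemble into the correct invariant component of $\eta$ as a $P_h$-quasi-process. This requires matching the Kuznetsov decomposition $\nu = \nu^{\mathrm{inv}} + \nu^{\mathrm{pot}}G$ to the analogous decomposition of $\mu$ under $P_h$, and exploiting that, although the past of a $P_h$-quasi-process path may be unbounded, its future is absorbed at $B$ in finite time almost surely, so the matching can be pinned down through the forward entrance distribution to $B$.
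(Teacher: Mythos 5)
Your overall architecture is sound and closely matches the paper's: pivot through the sub-Markov kernel $P_h$, apply the classical Kuznetsov/quasi-process correspondence to the pair $(P_h,\mu)$, compute $\cK^\nu|_{\{T_B=0\}}$ on cylinders via time-reversal and the $h$-transform, and derive (iv)--(v) by conditioning on successive last-exit times. However, there is a concrete error in the step you call ``purely algebraic,'' the bijection (1)$\leftrightarrow$(2).

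You propose to set $\mu(x)=h(x)\nu(x)$ for $x\in B^c$ and then choose $\mu|_B$ so that (ii) holds on $B$. But (ii) reads $\nu(x)=\1_{B^c}(x)\mu(x)/h(x)+\sum_{z\in B}\mu(z)g(z,x)$; for $x\in B^c$, your choice forces $\sum_{z\in B}\mu(z)g(z,x)=0$, which fails unless $\mu|_B=0$. The two restrictions of $\mu$ cannot be chosen independently: the term $\sum_{z\in B}\mu(z)g(z,x)$ couples them for all $x$. The correct inversion proceeds in the opposite order. Restrict (ii) to $x\in B$, where the indicator term vanishes, to get $\nu|_B=\mu|_B\,G|_{B\times B}$; since $G|_{B\times B}$ is the Green's function of the trace operator $Q^B$ (Proposition~\ref{prop435876}), this has the unique solution $\mu|_B=\nu|_B(\II-Q^B)$. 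Only then does (ii) determine $\mu$ on $B^c$ via $\mu(x)=h(x)\bigl(\nu(x)-\sum_{z\in B}\mu(z)g(z,x)\bigr)$, not by $\mu=h\nu$. This error propagates into your proof of~(i), which again uses the substitution $\mu=h\nu$ on $B^c$. Moreover, with your incorrect $\mu$ the claimed deduction $\nu Q\le\nu\Rightarrow\mu P_h\le\mu$ breaks on $B$, where $\mu|_B\ne\nu|_B$ in general (already $\mu(x)=\nu(x)/g(x,x)$ if $B=\{x\}$).

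One remark to ease the worry in your final paragraph: the invariant part of $\nu$ does not require separate treatment. The identity that $\cK^\nu$ is the image of $\hat P^\nu$ under time-reversal (stated just before Lemma~\ref{le:MP}) already packages the invariant and potential components into a single Markov family, and the $P_h$-quasi-process property is verified by computing $B'$-entrance laws directly from this; one never needs to match the two decompositions $\nu=\nu^{\mathrm{inv}}+\nu^{\mathrm{pot}}G$ and $\mu=\mu^{\mathrm{inv}}+\mu^{\mathrm{pot}}G_h$ explicitly. Your claims~(iv) and~(v) are correct in approach: consistency of $(\bar\mu_{B_n})$ follows from the Markov property of $\cK^\nu$, and $\nu|_{B_n}=\bar\mu_{B_n}G|_{B_n\times B_n}$ together with $\bar\mu_{B_n}=\nu|_{B_n}(\II-Q^{B_n})$ follows from the invertibility of the restricted Green's function. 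Finally, be careful with the typo $gQ=G-I$; the Green's-function identity you want is $GQ=G-\II$, with $G$ the full Green's operator.
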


We first verify the relation (ii) in the case where $\nu=g(x,\cdot)$ for a $x\in S$. 
\begin{proposition}\label{prop8357}
Let $x\in S$ and $\nu$ and $\mu$ be the measures given by
$$
\nu(y)= g(x,y) \text{ \ and \ } \mu(y)= h(x) E^x_h\Bigl[\sum_{n=0}^\infty\1_{\{y\}}(X_n)\Bigr]\qquad(y\in S).
$$
Then for all $y\in S$
$$
\nu(y)=\1_{B^c}(y)\frac{\mu(y)}{h(y)} +\sum_{z\in B} \mu(z) g(z,y).
$$
\end{proposition}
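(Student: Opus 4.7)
The plan is to unfold $\mu$ as a path sum for the $h$-transformed kernel and then match the expression term by term against a first-passage decomposition of $g(x,\cdot)$. The key telescoping identity is that for any $n\ge 1$ and states $x=y_0,y_1,\dots,y_n\in S$,
\[
h(x)\prod_{k=1}^{n}p^h_{y_{k-1},y_k} \;=\; h(y_n)\,\1_{\{y_0,\dots,y_{n-1}\in B^c\}}\prod_{k=1}^{n}q_{y_{k-1},y_k},
\]
which is immediate from the definition $p^h_{y,z}=\1_{B^c}(y)q_{y,z}h(z)/h(y)$: the factors $h(y_k)$ telescope and the absorbing factor $\1_{B^c}(y_{k-1})$ produces the interior-avoidance indicator. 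Summing over $y_1,\dots,y_{n-1}\in S$ and then over $n\ge0$ (the $n=0$ term $h(x)\1_{\{y=x\}}$ matches $h(y)\1_{\{y=x\}}$) yields
\[
\mu(y)\;=\;h(y)\sum_{n\ge0}\cQ_n^x\bigl(X_n=y,\,X_0,\dots,X_{n-1}\in B^c\bigr),
\]
valid for every $y\in S$ (with the interior constraint vacuous for $n=0$).

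Now I separate the two cases. For $y\in B$ the conditions $X_n=y$ and $X_0,\dots,X_{n-1}\in B^c$ say exactly that $n$ is the first entry time of the $Q$-chain into $B$ and its entrance location is $y$; using $h(y)=1$ on $B$,
\[
\mu(y)\;=\;H(x,y)\;:=\;\sum_{n\ge0}\cQ_n^x\bigl(T_B=n,\,X_n=y\bigr),
\]
where $T_B$ is the first hitting time of $B$. For $y\in B^c$ the interior condition extends to the endpoint $X_n=y$ as well, giving
\[
\mu(y)\;=\;h(y)\,G^{B^c}(x,y),\qquad G^{B^c}(x,y)\;:=\;\sum_{n\ge0}\cQ_n^x\bigl(X_0,\dots,X_n\in B^c,\,X_n=y\bigr).
\]

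Finally I decompose $g(x,y)$ according to whether the underlying $Q$-path ever meets $B$. Paths avoiding $B$ entirely contribute $\1_{B^c}(y)G^{B^c}(x,y)$ (automatically zero for $y\in B$), while paths that do hit $B$ can be split at the first entry time $T_B$ at some state $z\in B$; by Chapman--Kolmogorov applied at $T_B$, these contribute $\sum_{z\in B}H(x,z)\,g(z,y)$. Combining,
\[
g(x,y)\;=\;\1_{B^c}(y)\,G^{B^c}(x,y)+\sum_{z\in B}H(x,z)\,g(z,y),
\]
and substituting $G^{B^c}(x,y)=\mu(y)/h(y)$ on $B^c$ and $H(x,z)=\mu(z)$ on $B$ yields the claim. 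The only delicate point is the degenerate case $x\in B$: then $p^h_{x,\cdot}\equiv0$ forces $\mu=\delta_x$, while on the $g$-side $G^{B^c}(x,\cdot)\equiv0$ and $H(x,z)=\1_{\{z=x\}}$, so both sides collapse consistently to $g(x,y)$. No convergence issue arises because $G$ is finite throughout.
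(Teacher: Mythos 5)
Your proof is correct and follows essentially the same route as the paper: you unfold $\mu$ into $Q$-path sums confined to $B^c$ by telescoping the $h$-factors in the $P_h$-kernel, and then match this against a first-passage decomposition of $g(x,\cdot)$ at $B$ — exactly the paper's computation, written in probabilistic rather than matrix notation. The only cosmetic difference is that the paper dispatches the $x\in B$ case in one line at the start while you verify it at the end; both are fine.
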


\begin{proof}
The statement is trivial if $x\in B$ since then $\mu=\1_{\{x\}}$. We restrict attention to the case where $x\in B^c$.
For $y\in B$, one has 
$$
\mu(y)= h(x)\, P^x_h(H_B=y) =\sum_{n=1}^\infty h(x)\, P^x_h(T_B=n, X_n=y) =\sum_{n=0}^\infty ((Q|_{B^c\times B^c})^n Q|_{B^c\times B})_{x,y}
$$
and, for $y\in B^c$,
\begin{align*}
\mu(y)=h(x)\, E_h^x\Bigl[\sum_{n=0}^\infty \1_{\{y\}}(X_n)\Bigr]  = h(y) \sum_{n=0}^\infty (Q|_{B^c\times B^c})^n_{x,y}.
\end{align*}
Consequently, for $y\in B$,
\begin{align*}
\nu(y)&= \sum_{n=0}^\infty\sum_{m=0}^\infty \bigl((Q|_{B^c\times B^c})^nQ|_{B^c\times B} Q^m|_{B\times S} \bigr)_{x,y}  = \sum_{z\in B} \mu(z)\, g(z,y)
\end{align*}
and, for $y\in B^c$,
\begin{align*}
\nu(y)&=\sum_{n=0}^\infty \bigl((Q|_{B^c\times B^c})^n\bigr)_{x,y}+ \sum_{n=0}^\infty\sum_{m=0}^\infty \bigl((Q|_{B^c\times B^c})^nQ|_{B^c\times B} Q^m|_{B\times S} \bigr)_{x,y} \\
&=\frac{\mu(y)}{h(y)}+ \sum_{z\in B} \mu(z)\, g(z,y).
\end{align*}
\end{proof}

\begin{proposition}\label{prop435876}
The  restricted operator $G|_{B\times B}$ is the Green's function of
$$
Q^B:B\times B\to [0,\infty), (x,y)\mapsto \sum_{n=1}^\infty \cQ_n^x(T^*_B=n, X_n=y),
$$
where $T^*_B$ is the first return time to $B$.
\end{proposition}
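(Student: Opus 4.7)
The plan is to expand both sides as sums over paths and identify them via a standard excursion decomposition at the visits to $B$. Concretely, for $x,y\in B$ one has
\[
 (G|_{B\times B})(x,y)=g(x,y)=\sum_{n=0}^\infty Q^n(x,y)=\sum_{n=0}^\infty \cQ_n^x(X_n=y),
\]
so we must show that this equals $\sum_{k=0}^\infty (Q^B)^k(x,y)$.

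The key step is a decomposition of every $Q$-path from $x$ to $y$ according to its successive visits to $B$. Given a path contributing to $\cQ_n^x(X_n=y)$, let $0=t_0<t_1<\cdots<t_k=n$ denote the times at which the path lies in $B$; this is well-defined since $x,y\in B$, and $k=0$ occurs precisely in the trivial case $n=0$, $x=y$, matching $(Q^B)^0=\II$. On the excursion $[t_{i-1},t_i]$, the intermediate states $X_{t_{i-1}+1},\ldots,X_{t_i-1}$ lie in $B^c$ and $X_{t_i}\in B$, so by the very definition of $Q^B$,
\[
 \sum_{\substack{m\ge 1 \\ z_1,\dots,z_{m-1}\in B^c}} q_{u,z_1} q_{z_1,z_2}\cdots q_{z_{m-1},v}=\sum_{m\ge 1}\cQ_m^u(T_B^*=m,X_m=v)=Q^B(u,v)
\]
for $u,v\in B$. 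Summing over all admissible intermediate states on each excursion while keeping the endpoints $z_i=X_{t_i}\in B$ fixed, one obtains
\[
 \sum_{n=0}^\infty \cQ_n^x(X_n=y)=\sum_{k=0}^\infty \sum_{z_1,\dots,z_{k-1}\in B} Q^B(x,z_1)\,Q^B(z_1,z_2)\cdots Q^B(z_{k-1},y)=\sum_{k=0}^\infty (Q^B)^k(x,y),
\]
which is the claim.

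I expect no substantive obstacle: since $G$ is assumed finite, all sums converge absolutely and Fubini justifies the interchange of the sum over $n$ with the sum over the number $k$ of excursions. The only bookkeeping point is that the decomposition is exhaustive and disjoint, i.e.\ every path with $x,y\in B$ has a unique visit sequence $0=t_0<\cdots<t_k=n$, which is immediate. Alternatively, the identity may be phrased as an application of the strong Markov property for $\cQ^x$ at the stopping time $T_B^*$: this yields the recursion $g(x,y)=\delta_{x,y}+\sum_{z\in B} Q^B(x,z)g(z,y)$ for $x,y\in B$, and iterating gives the geometric series $\sum_k (Q^B)^k(x,y)$.
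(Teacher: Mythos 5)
Your proof is correct and is essentially the paper's argument: both sides are identified via the excursion decomposition of a $Q$-path at its successive visits to $B$, with the (Tonelli) interchange of the two sums being harmless because all terms are nonnegative. The paper just packages the same decomposition slightly differently, first proving by induction that $(Q^B)^k_{x,y}=\sum_{\ell\ge k}\cQ_\ell^x(X_\ell=y,\ X_1,\dots,X_\ell\text{ has exactly }k\text{ entries in }B)$ and then summing over $k$, which is exactly your ``$k$ excursions'' bookkeeping.
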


\begin{proof}
The proof is standard and just given for convenience. By induction one verifies that for $x,y\in B$ 
$$
(Q^B)^k_{x,y}= \sum_{\ell=k}^\infty \cQ_\ell^x(X_\ell=y, X_1,\dots,X_{\ell}\text{ has exactly $k$ entries in $B$}).
$$
Consequently,
\begin{align*}
\sum_{k=0}^\infty (Q^B)^k_{x,y}&=\sum_{k=0}^\infty \sum_{\ell=k} \cQ_\ell^x(X_\ell=y, X_1,\dots,X_{\ell}\text{ has exactly $k$ entries in $B$})\\
&=\sum_{\ell=0}^\infty \underbrace{\sum_{k=0}^\ell  \cQ_\ell^x(X_\ell=y, X_1,\dots,X_{\ell}\text{ has exactly $k$ entries in $B$})}_{=\cQ_\ell^x(X_\ell=y)}=g(x,y).
\end{align*}
\end{proof}

\begin{proof}We prove the following claims:
\begin{enumerate}\item For every $Q$-excessive measure $\nu$, 
   $\cK^\nu|_{\{T_B=0\}}\circ [\,\cdot\,]^{-1}$ is a $P_h$-quasi-process.
 \item  $\cK^\nu|_{\{T_B=0\}}\circ [\,\cdot\,]^{-1}$ has occupation measure $\mu$ satisfying (ii).
\item  Given a $Q$-excessive measure $\nu$, there exists a unique measure $\mu$ satisfying (ii).
\item  Given a $P_h$-excessive measure $\mu$, the measure  $\nu$ given by (ii) is $Q$-excessive.
\item For every $Q$-excessive measure $\nu$ and increasing sequence $(B_n)$ with $B_n\uparrow S$, the measures $(\bar \mu_{B_n}:n\in\N)$ given by
$$
\bar \mu_{B_n}(x)= \cK^\nu|_{\{T_{B_n}=0\}}(X_0=x)
$$
form a consistent entrance family and one has 
\begin{enumerate} \item[(a)] for every finite $B'\supset B$, $\cK^\nu|_{\{T_B=0\}} (H_{B'}=x)=h(x)  \, \bar \mu_{B'}(x) $ and
 \item[(b)] for every $n\in\N$, $\nu|_{B_n}=\bar \mu_{B_n} G|_{B_n\times B_n}$ and $\nu|_{B_n}(\II-Q^{B_n})=\bar \mu_{B_n}$.
\end{enumerate}
\item For every consistent entrance family $(\bar\mu_{B_n}:n\in\N)$  there is a unique $Q$-excessive measure $\nu$  satisfying
$$
\bar \mu_{B_n}G|_{B_n\times B_n}= \nu|_{B_n}\text{, \ for every $n\in\N$}.
$$
\end{enumerate}
We explain how the statement of the theorem is then obtained. To a $Q$-excessive measure $\nu$ we associate the $P_h$-quasi-process $\eta=\cK^\nu|_{\{T_B=0\}}\circ [\,\cdot\,]^{-1}$ (Claim 1) and the respective occupation measure $\mu$ (Claim 2). As is well known there is an isomorphism mapping all $P_h$-quasi-processes to the cone of $P_h$-excessive measures that takes a quasi-process to its occupation measure, see for instance \cite{Get90}. So to show that every $P_h$-quasi-process, resp., $P_h$-excessive measure is reached by the above construction we use (Claim~4) to choose~$\nu$. Then Claim~3 implies that the measure $\mu$ satisfying (ii) is unique so that  $\mu$ is indeed the occupation measure of the respective $P_h$-quasi-process associated to $\nu$. This completely establishes the isomorphism between the cones of $Q$-excessive measures, $P_h$-excessive measures and $P_h$-quasi-processes satisfying properties (i)-(iii).

It remains to consider the related consistent entrance families.  Claim~5 relates a $Q$-excessive measure $\nu$ to a consistent entrance family $(\bar \mu_{B_n})_{n\in\N}$ satisfying (iv) and (v).  Conversely,  Claim~6 implies existence of a $Q$-excessive measure satisfying~(v) for every consistent entrance family.\smallskip

 \noindent\textbf{Proof of Claim 1:} We  verify that $\eta:=\cK^\nu|_{\{T_B=0\}}\circ [\,\cdot\,]^{-1}$
is a $P_h$-quasi-process. 
One has for $B'\supset B$ and for $n\in\N_0$ and  $x_0,\dots,x_n\in S$:
$$\eta\bigl(\pi_{B'}= (x_0,\dots,x_n,\partial,\dots )\bigr)= \cK^\nu|_{\{T_B=0\}}\bigl(T_{B'}=-n, T_B=0, X_{-n}=x_0,\dots, X_0=x_n\bigr).
$$
Note that the latter measure is zero in each of the following cases:
(a) $x_n\in B^c$, (b)  one of the states $x_0,\dots,x_{n-1}\in B$ or (c)  $x_0\not \in B'$. 
Assume that none of the latter cases is fulfilled. 
We denote by $(\hat P^x:x\in S)$  the $\nu$-adjoint chain and by $\bar T_{\bar B}= \sup\{n\in\N_0: X_n\in \bar B\}$ ($\bar B\subset S$) the last exit time from $B$ (with the convention that $\sup\emptyset =-\infty$). We get that
\begin{align*}
\eta\bigl(\pi_{B'}= (x_0,\dots,x_n,\partial,\dots )\bigr)&=  \hat P^{\nu} \bigl(\bar T_B=0, X_0=x_n,\dots, X_n=x_0, \bar T_{B'}=n\bigr)\\
&=\hat P^{\nu} \bigl(X_0=x_n,\dots, X_n=x_0) \,\hat P^{x_0}(\bar T_{B'}=0).
\end{align*}
Using that $h(x_n)=1$ we get that
\begin{align*}\hat P^{\nu} \bigl(X_0=x_n,\dots, X_n=x_0)&=\nu(x_n) \hat p _{x_n,x_{n-1}}\ldots \hat p _{x_1,x_{0}}\\
&=\nu(x_0) \, q_{x_0,x_1} \ldots q_{x_{n-1},x_n} h(x_n) \\
&= \nu(x_0) \,h(x_0)\, p^h_{x_0,x_1} \ldots p^h_{x_{n-1},x_n}\end{align*}
and $\nu(x_0) \hat P^{x_0}(\bar T_{B'}=0)= \cK^\nu(X_0=x_0, T_{B'}=0)=:\bar \mu_{B'}(x_0)$. Consequently,
\begin{align}\label{eq98571}
\eta\bigl(\pi_{B'}= (x_0,\dots,x_n,\partial,\dots )\bigr)=\bar \mu_{B'}(x_0) \,h(x_0)\, p^h_{x_0,x_1} \ldots p^h_{x_{n-1},x_n}.
\end{align}
This equality also holds in the case where one of the properties (a), (b) or (c) holds in which case the left and right hand side are zero. Consequently, $\eta$ is a $P_h$-quasi-process. 
\smallskip

 \noindent\textbf{Proof of Claim 2:} We compute the occupation measure $\mu$ of $\eta:=\cK^\nu|_{\{T_B=0\}}\circ [\,\cdot\,]^{-1}$. Take $x\in S$ and $B'\supset B\cup\{x\}$. Then with~(\ref{eq98571})
\begin{align*}
\mu(x) = \int \sum_{n} \1_{\{x\}}(X_n)\, d\eta\circ{\pi_{B'}}= E_h^{h\cdot \bar \mu_{B'}}\Bigl[\sum_{n=0}^\infty \1_{\{x\}}(X_n)\Bigr]= \sum_{z\in B'} \bar \mu_{B'}(z)  \,h(z)\,  g_h(z,x),
\end{align*}
where $g_h(z,x)=E^z_h[\sum_{n=0}^\infty\1_{\{y\}}(X_n)]$ is the Green's function for $P_h$. Conversely, using the Markov property (Lemma~\ref{le:MP}) we get that
\begin{align*}
\nu(x)&=\cK^\nu(X_0=x,T_B=0) =\sum_{z\in B'}\sum_{n\in\N_0} \cK^\nu(X_{-n}=z, T_{B'}=-n,X_0=x,T_B=0)\\
&=\sum_{z\in B'} \underbrace{\cK^\nu( X_0=z, T_{B'}=0)}_{=\bar \mu_{B'}(z)} \sum_{n=0}^\infty \underbrace{\cQ_n^z(X_n=x)}_{=(Q^n)_{z,x}}= \sum_{z\in B'} \bar \mu_{B'}(z) \,g(z,x).
\end{align*}
Proposition \ref{prop8357} gives a linear relation between  the Green's function of $Q$ and $P_h$. Together with  the latter two display formulas it entails
that
$$
\nu(x)=\1_{B^c}(x)\frac{\mu(x)}{h(x)} +\sum_{z\in B} \mu(z) g(z,x).
$$
%
%
%
%
\smallskip

 \noindent\textbf{Proof of Claim 3:}
 First consider the equation restricted to the finite set $B$: one has
$$
\nu|_B= \mu|_B G|_{B\times B}
$$
By Proposition~\ref{prop435876}, $G|_{B\times B}$ is the Green's function of $Q^B$ so that
$$
G|_{B\times B}(\II-Q^B)= \II.
$$
Consequently,
$$
\nu|_B(\II-Q^B)= \mu|_B
$$
and $\mu$ is uniquely determined on $B$. Obviously, then (ii) yields also uniqueness of $\mu$ on~$B^c$.
\smallskip

 \noindent\textbf{Proof of Claim 4:} One has
\begin{align*}
\sum_{y\in S} \nu (y) q_{y,x}&=\sum_{y\in B^c}  \frac {\mu(y)}{h(y)}q_{y,x} +\sum_{y\in S} \sum_{z\in B} \mu(z) g(z,y) q_{y,x}\\
&=\frac 1{h(x)} \underbrace{\sum_{y\in B^c} \mu(y)p^h_{y,x}}_{\le \mu(x)}+  \sum_{z\in B} \mu(z) \underbrace{\sum_{y\in S}  g(z,y) q_{y,x}}_{=g(z,x)-\1_{\{z=x\}}}.
\end{align*}
  If $x\in B^c$, then the right hand side is less than or equal to $\nu(x)$. If $x\in B$, then $h(x)=1$ and we get again that the right hand side is less than or equal to $\nu(x)$.
 \smallskip

 \noindent\textbf{Proof of Claim 5:} One has by the Markov property, for $n\in\N$ and $x\in S$, 
\begin{align*}
\bar \mu_{B_n}(x)&=\cK^\nu|_{\{T_{B_n}=0\}}(X_0=x)=\sum_{z\in B_{n+1}} \sum_{k\in\N_0} \cK^\nu(   T_{B_{n+1}}=-k, X_{-k}=z, T_{B_n}=0,X_0=x)\\
&=\1_{B_n}(x) \sum_{z\in B_{n+1}} \underbrace{\cK^\nu(   T_{B_{n+1}}=0, X_{0}=z)}_{=\bar \mu_{B_{n+1}}(z)} \,\sum_{k=0}^\infty \cQ_k^z(X_k=x, X_0,\dots,X_{k-1}\not \in B)
\end{align*}
so that $(\bar\mu_{B_n}:n\in\N)$ is a consistent entrance family along $(B_n)$. We verified (a)  in the proof of the first claim, see~(\ref{eq98571}). The first part of property (b) follows since for general  finite $B'\subset S$ and $x\in B'$
\begin{align*}
\nu(x)&= \cK^\nu(X_0=x)= \sum_{z\in B'}\sum_{m=0}^\infty \cK ^\nu(T_{B'}=-m, X_{-m}=z, X_0=x) \\
&= \sum_{z\in B'}\underbrace{\cK ^\nu(T_{B'}=0, X_{0}=z)}_{=\bar \mu_{B'}(z)} \sum_{m=0}^\infty \underbrace{\cQ_m^z(X_m=x)}_{=(Q^m)_{z,x}}\\
&= \sum_{z\in B'}\bar \mu_{B'}(z) g(z,x).
\end{align*}
By Proposition~\ref{prop435876}, $G|_{B'\times B'}$ is the Green's function of $Q^{B'}$ so that it has inverse $\II-Q^{B'}$ and we obtain the second part of property (b).\smallskip

 \noindent\textbf{Proof of Claim 6:}
We define a measure $\nu$ via $\nu|_{B_n}=\bar \mu_{B_n} G|_{B_n\times B_n}$ for $n\in\N$. To show that $\nu$ is well-defined we verify consistency: for $n\in\N$ and $x\in B_n$, one has 
\begin{align*}
\bar \mu_{B_{n+1}} G|_{B_{n+1}\times B_{n+1}} (x)&= \sum_{z\in B_{n+1}} \bar \mu_{B_{n+1}}(z)\sum_{m=0}^\infty \cQ_m^z(X_m=x)\\
&=\sum_{z\in B_{n+1}} \bar \mu_{B_{n+1}}(z)\sum_{m=0}^\infty  \sum_{z'\in B_n}\sum_{\ell=0}^m  \underbrace{\cQ_m^z(T_{B_n}= \ell, X_\ell=z', X_m=x)}_{=\cQ_{\ell}^z(X_\ell= z', X_0,\dots, X_{\ell-1} \in B_n^c) \,\cQ_{m-\ell}^{z'} (X_{m-\ell} =x)}\\
&= \sum_{z'\in B_n}\underbrace{\sum_{z\in B_{n+1}} \bar \mu_{B_{n+1}}(z)\sum_{\ell=0}^\infty\cQ_{\ell}^z(X_\ell= z', X_0,\dots, X_{\ell-1} \in B_n^c)}_{=\bar \mu_{B_n}(z')} \,\underbrace{\sum_{m'=0}^\infty\cQ_{m'}^{z'} (X_{m'} =x)  }_{=g(z',x)}\\
&=\bar \mu_{B_n} G|_{B_n\times B_n} (x).
\end{align*}
Next, we verify that  $\nu$ is $Q$-excessive:  by monotone convergence, one has, for $x\in S$, 
\begin{align*}
\sum_{y\in S} \nu _y q_{y,x}&= \lim_{n\to\infty} \sum_{y\in B_n} \nu _y q_{y,x}=\lim_{n\to\infty} \bar \mu_{B_n} G|_{B_n\times B_n}  Q|_{B_n\times B_n} (x)\\
&\le\lim_{n\to\infty} \bar \mu_{B_n} G|_{B_n\times B_n}(x)= \nu(x).
\end{align*}
\end{proof}

An immediate consequence of Theorems~\ref{thm:35676} and~\ref{thm:main1} is the following theorem.

\begin{theorem}\label{theo:57}
Let $B$ be a norming region and $\nu$ be a $Q$-excessive measure.\begin{enumerate}
\item  There exists a unique $P_h$-quasi-process $\eta$  such that the occupation measure $\mu$ of~$\eta$ satisfies~(\ref{eq_rel}). 
\item If $\eta$ is decorable, then Theorem~\ref{theo:alt_main} relates $\eta$ to a branching quasi-process $\Xi$ with occupation measure $\nu$. 
\item Every branching quasi-process $\Xi$ is obtained by the above two steps when starting with its occupation measure $\nu$.
\end{enumerate}
\end{theorem}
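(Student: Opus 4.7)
The plan is to assemble the three isomorphism theorems~\ref{thm:main1}, \ref{theo:alt_main}, and~\ref{thm:35676} and to notice that all three invoke the same linear identity
$$\nu(x)=\1_{B^c}(x)\,\frac{\mu(x)}{h(x)}+\sum_{z\in B}\mu(z)\,g(z,x)$$
between a measure $\nu$ on $S$ and a measure $\mu$ on $S$. Once this common relation is spotted, essentially no computation is required: the proof is a diagram chase.

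For part~(1), I would feed the given $Q$-excessive measure $\nu$ into Theorem~\ref{thm:main1}. Property~(ii) there produces a unique $P_h$-excessive measure $\mu$ related to $\nu$ by the displayed identity, and property~(iii) then furnishes a unique $P_h$-quasi-process $\eta$ whose occupation measure is~$\mu$. Since Theorem~\ref{thm:35676} rewrites this very identity in terms of $\eta$ and $\nu$, this $\eta$ is precisely the object asserted in~(1); uniqueness is inherited from Theorem~\ref{thm:main1}.

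For part~(2), assuming that the $\eta$ obtained in~(1) is decorable, Theorem~\ref{theo:alt_main} associates a branching quasi-process $\Xi$ to $\eta$. The occupation measure of $\Xi$ is computed in Theorem~\ref{thm:35676} and is given by the same linear identity; it therefore equals the $\nu$ we started from. For part~(3), conversely, one takes an arbitrary branching quasi-process $\Xi$, applies Theorem~\ref{theo:alt_main} to obtain a decorable $P_h$-quasi-process $\eta$ with occupation measure $\mu$, and observes via Theorem~\ref{thm:35676} that the occupation measure $\nu$ of $\Xi$ is $Q$-excessive and matches $\mu$ through the same identity. Running the two-step construction from parts~(1) and~(2) starting with this $\nu$ returns, by the uniqueness in Theorem~\ref{thm:main1}, the original $\eta$, and hence, by the bijection in Theorem~\ref{theo:alt_main}, the original $\Xi$.

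The main (mild) obstacle is purely bookkeeping: one has to be careful that at each invocation of Theorem~\ref{thm:main1}, \ref{theo:alt_main}, or~\ref{thm:35676} the direction of the isomorphism is the correct one and that the same linear $\mu$-to-$\nu$ relation is used throughout. Once this is aligned, all three claims reduce to an immediate concatenation of the cone isomorphisms already established.
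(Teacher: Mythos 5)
Your proposal is correct and matches the paper's intent: the paper states the result without proof, introducing it with ``An immediate consequence of Theorems~\ref{thm:35676} and~\ref{thm:main1} is the following theorem,'' and your writeup is exactly the diagram chase that sentence is gesturing at. The only cosmetic nit is that in part~(3) you attribute the $Q$-excessivity of $\nu$ to Theorem~\ref{thm:35676}, whereas it really comes from Claim~4 in the proof of Theorem~\ref{thm:main1} (Theorem~\ref{thm:35676} only establishes the linear relation between $\mu$ and $\nu$); this does not affect the validity of the argument.
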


\section{A spine construction for branching quasi-processes}\label{sec6}

We consider the particular case where the intensity operator $Q$ is sub-Markovian. In that case, we can work with $Q$-Markov families.
The central technical result is the following proposition.

\begin{proposition}\label{prop456} Let $Q$ be  sub-Markovian, i.e.,  $m_x\le 1$ for all $x\in S$. We relate  a $Q$-excessive measure $\nu$ with the  unique  $Q$-quasi-process $\bar \eta$ satisfying
$$
\nu(A)=\int \sum_{n\in\Z} \1_A(w_n)\, d\bar \eta([(w_n)]),\text{ \ for }A\subset S.
$$ 
Theorem~\ref{thm:main1} associates~$\nu$ with the $P_h$-quasi-process $\eta$ given by
$$
\eta= \bar \eta|_{D_B^*}\circ \mathrm{death}_B^{-1}, 
$$
where $\mathrm{death}_B: D \to D$ takes the path $w=(w_n)$ to the path
$$
n\mapsto \begin{cases} w_n, &\text{ \ if } n\le T_B(w),\\ \partial, &\text{ \ if } n>T_B(w).\end{cases}
$$
\end{proposition}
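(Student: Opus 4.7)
The plan is to identify $\eta':=\bar\eta|_{D_B^*}\circ\mathrm{death}_B^{-1}$ as a $P_h$-quasi-process with the same entrance family as the $P_h$-quasi-process $\eta$ produced by Theorem~\ref{thm:main1} applied to $\nu$, and then invoke the uniqueness part of that theorem. The driving observation is that since $Q$ is sub-Markovian there is no genuine branching and $\#\cH_B\in\{0,1\}$, so $h(x)=\E^x[\#\cH_B]=P^x(T_B<\infty)$ is just the hitting probability of $B$ for the $Q$-Markov chain. Consequently, for $x\in B^c$, the $P_h$-kernel $p^h_{x,y}=q_{x,y}h(y)/h(x)$ coincides with the transition of the $Q$-chain conditioned on hitting $B$---the Doob $h$-transform---so after restriction to $D_B^*$ and killing at $T_B$ the $Q$-quasi-process ought to be the $P_h$-quasi-process.

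Fix an increasing sequence of finite sets $B\subset B_1\subset B_2\subset\ldots$ with $B_n\uparrow S$. I would first argue that the entrance family of $\bar\eta$ along $(B_n)$ is precisely the consistent entrance family $(\bar\mu_{B_n})$ associated with $\nu$ in Claim~5 of the proof of Theorem~\ref{thm:main1}; this is the classical Hunt--Weil theorem in the sub-Markov setting and can also be obtained by rerunning the proof of Theorem~\ref{thm:main1} with $Q$ in place of $P_h$. Applying the Markov property of $\bar\eta$ at the first entrance time into $B_n$ and restricting to $\{T_B<\infty\}$ then yields
\begin{align*}
\eta'(H_{B_n}=x)=\bar\eta(D_B^*,\,w_{T_{B_n}}=x)=\bar\mu_{B_n}(x)\,P^x(T_B<\infty)=h(x)\,\bar\mu_{B_n}(x),
\end{align*}
which by property (iv) of Theorem~\ref{thm:main1} is exactly the entrance measure of $\eta$ at $B_n$.

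Next, conditional on first entering $B_n$ at $x$ the $\bar\eta$-continuation is a $Q$-chain from $x$ by the Markov property; further conditioning on $\{T_B<\infty\}$ invokes Doob's $h$-transform and turns the pre-$T_B$ part into a $P_h$-chain from $x$ absorbed in $B$, while $\mathrm{death}_B$ converts absorption into death. Hence under $\eta'$ the conditional law of the $\pi_{B_n}$-continuation given entrance at $x$ is $P_h^x$, i.e., $\eta'$ satisfies the Markov property of a $P_h$-quasi-process at $B_n$ with entrance measure $h\cdot\bar\mu_{B_n}$. Since $(B_n)$ is cofinal among the finite subsets of $S$, $\eta'$ is a $P_h$-quasi-process with the same entrance family as $\eta$, so Theorem~\ref{thm:main1} gives $\eta'=\eta$.

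The step I view as most delicate is verifying that conditioning the $Q$-quasi-process on $\{T_B<\infty\}$ commutes with the Markov property in the manner above, i.e., that one may condition on the hitting event path-by-path after first fixing the entrance state into $B_n$. Once the entrance state is fixed this reduces to the standard Doob $h$-transform for an ordinary sub-Markov chain started from that state, so no serious technical obstruction remains; the only genuine content is packaging this decomposition along the sequence $(B_n)$ and appealing to the uniqueness half of Theorem~\ref{thm:main1}.
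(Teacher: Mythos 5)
Your proposal is correct, and it follows the same first step as the paper (showing that $\bar \eta|_{D_B^*}\circ\mathrm{death}_B^{-1}$ is a $P_h$-quasi-process via the Doob $h$-transform), but identifies the result with the $\eta$ of Theorem~\ref{thm:main1} through a genuinely different channel. The paper computes the occupation measure $\mu$ of $\eta'$ directly (splitting $P^{\bar\mu_{B'}}(X_n=y)$ according to whether $T_B<n$ or $T_B\ge n$) and checks that $\mu$ and $\nu$ satisfy the linear relation (ii); this is entirely self-contained and never needs to know the entrance family of $\bar\eta$ in terms of the Kuznetsov measure $\cK^\nu$. You instead match entrance families via property (iv), which requires a preliminary identification of the $(B_n)$-entrance family of the $Q$-quasi-process $\bar\eta$ with the measures $\bar\mu_{B_n}(x)=\cK^\nu|_{\{T_{B_n}=0\}}(X_0=x)$ of Claim~5. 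That identification is standard for sub-Markovian kernels (indeed one can derive it from the relation $\nu|_{B_n}=\bar\mu^{\bar\eta}_{B_n}G|_{B_n\times B_n}$ combined with the invertibility of $G|_{B_n\times B_n}$, which is in effect property (v)), but it is not proved in the paper's appendix, so your route imports one external fact the paper's route avoids. In exchange, your identification step is shorter once that fact is granted: the entrance family computation $\eta'(H_{B_n}=x)=h(x)\bar\mu_{B_n}(x)$ is a one-line consequence of the Markov property of $\bar\eta$ plus $h(x)=P^x(T_B<\infty)$, whereas the paper's verification of (ii) takes a page. Both routes are rigorous; if you wanted to stay entirely within what the paper proves, the cleanest fix to your argument would be to insert a short derivation of $\nu|_{B_n}=\bar\mu^{\bar\eta}_{B_n}G|_{B_n\times B_n}$ from the quasi-process property of $\bar\eta$ and then apply (v) and invertibility of $G|_{B_n\times B_n}$ (Proposition~\ref{prop435876}) to conclude $\bar\mu^{\bar\eta}_{B_n}=\bar\mu_{B_n}$.
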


\begin{remark}[Spine construction of branching interlacements]\label{rem:3578}
The theorem entails an algorithm for the construction of  interlacements for branching Markov chains in the case where $Q$ is sub-Markovian. Suppose we want to generate a branching interlacement for a given  $Q$-excessive measure $\nu$. Starting with the unique  $Q$-quasi-process $\bar \eta$ with occupation measure $\nu$ we provide an algorithm that generates all  trees hitting a set $B$ of a random interlacement with occupation measure $\nu$. 
One proceeds as follows:
\begin{enumerate}\item take an interlacement with intensity measure $\bar \eta$,
\item erase all paths that do not hit $B$,
\item remove all entries of the paths that have strict predecessors in $B$ (apply the $\mathrm{death}_B$ operation),
\item then apply independently the decoration procedure on the  remaining paths and
\item keep every tree $\mathfrak t$ (generated in 4) independently with probability $1/\cH_B(\mathfrak t)$.
\end{enumerate}
Indeed, this generates a branching interlacement with occupation measure $\nu$ (provided that there exists one): By Proposition~\ref{prop456}, operations one to three yield an $P_h$-interlacement with intensity measure $\eta$. After  step 4 the generated trees form a Poisson point process with intensity measure $\Xi_B$ where $\Xi_B$ is the $B$-biased variant of a branching quasi-process $\Xi$ with occupation measure $\nu$. Recall that we have on  $\mathfrak T_B$ that  $\frac {d\Xi}{d\Xi_B}=\frac 1{\#\cH_B}$, so that the last step produces a Poisson point process with intensity measure $\Xi|_{\mathfrak T_B}$.
%
\end{remark}

\begin{proof}[Proof of Proposition~\ref{prop456}]
First we show that $\eta:=\bar \eta|_{D_B^*}\circ \mathrm{death}_B^{-1}$ is a $P_h$-quasi-process.
Let $B'\supset B$ be a finite set and denote by $\bar \mu_{B'}$ the $B'$ entrance measure of $\bar   \eta$, i.e., for $A\subset S$
$$
\bar \mu_{B'}(A)= \bar \eta|_{D_{B'}^*} ( H_{B'}\in A).
$$
Obviously,  $\eta$-almost every path enters $B$. Let $n\in\N_0$, $w_0,\dots,w_{n-1}\in S\backslash B$ and $w_n\in B$, and set $w_{k}=\partial$ for $k\ge n+1$. Then
\begin{align*}
 \eta (\pi_{B'}= (w_k)_{k\in\N_0}) &= P^{\bar \mu_{B'}}(X_k=w_k, \text{ for }k=0,\dots,n)=\bar \mu_{B'}(w_0) \prod _{k=1}^n q_{w_{k-1},w_k}\\
 &=  \bar \mu_{B'}(w_0) \, h(w_0) \,  \prod _{m=1}^n q^h_{w_{m-1},w_m} = \bar \mu_{B'}(w_0) \, h(w_0) \,P_h^{w_0}((w_k)_{k\in\N_0}).
\end{align*}
All other paths have measure zero and we showed that $\eta$ is a $P_h$ quasi process with $B'$ entrance measure $\mu_{B'}$ satisfying
$$
\frac {d \mu_{B'}}{d \bar \mu_{B'}}=h.
$$

Next, we verify that  the occupation measures $\mu$ of $\eta$ and $\nu$ of $\bar \eta$ satisfy  for all $y\in S$
$$\nu(y)= \1_{B^c} (y) \, h(y)^{-1}\, \mu(y) +\sum_{x\in B} \mu(x)\, g(x,y).
$$
For $y\in B'$ we get with  the quasi-process-property that
$$
\nu(y)=\int \sum_{n\in\Z} \1_{\{y\}} (w_n)\, d\eta([w])= E^{\bar \mu_{B'}}\Bigl[\sum_{n\in\N_0} \1_{\{y\}}(X_n)\Bigr] .
$$
We observe that
\begin{align}\label{eq746599}
P^{\bar \mu_{B'}}(X_n=y)=P^{\bar \mu_{B'}}(X_n=y, T_B<n)+P^{\bar \mu_{B'}}(X_n=y, T_B\ge n) 
\end{align}
and note that the contribution of the first summand on the right hand side satisfies
$$
\sum_{n\in\N} P^{\bar \mu_{B'}} (X_n=y, T_B< n)= \sum_{z\in B} \underbrace{\bar \mu_B(z)}_{=\mu_B(z)} (g(z,y)-\1_{\{z=y\}}).
$$
Moreover, using the  Markov property we get that for $z\in S$
 \begin{align*}
P^z(X_n=y, T_B\ge n)&= \frac 1{h(y)} P^z(X_n=y, T_B\in[ n,\infty))= \frac {h(z)}{h(y)} P_h^z(X_n=y),
\end{align*}
so that the second summand of (\ref{eq746599}) satisfies
\begin{align*}
\sum_{n\in\N_0} P^{\bar \mu_{B'}}(X_n=y, T_B\ge n) &=\sum_{n\in\N_0}\sum_{z\in B'} \bar \mu_{B'}(z)\frac {h(z)}{h(y)} P_h^z(X_n=y)\\
&=\frac {1}{h(y)}\sum_{n\in\N_0}\sum_{z\in B'}  \mu_{B'}(z) P_h^z(X_n=y)=\frac 1{h(y)} \mu(y).
\end{align*}
Altogether, we conclude with the property that  $\mu(x)=\mu_B(x)$ for $x\in B$ that
\begin{align*}
\nu(y)=\sum_{n\in\N_0} P^{\bar \mu_{B'}}(X_n=y)=\sum_{x\in B}  \mu(x) (g(x,y)-\1_{\{x=y\}})+\frac 1{h(y)} \mu(y).
\end{align*}
For $y\in B$, one has  $h(y)=1$ and, hence, $\nu(y)=\sum_{x\in B}  \bar \mu_B(x) g(x,y)$, and for $y\in B^c$, one has
$$
\nu(y)=\frac {\bar \mu_B(y)}{h(y)}+ \sum_{x\in B}  \mu(x) g(x,y).
$$
\end{proof}

\section{Decorability}\label{sec7}

In the following, $B$ is again a norming region and
$$
h(x)= \E^x[\#\cH_{B}]
$$
for $x\in S$.

\begin{theorem}\label{thm:485} Let $B\subset S$ be a norming region 
and suppose that
$$
\bar m_x:=\sum_{k\in\N} (k-1)  \, d_x^\mathrm{s.b.}(k)
$$
is finite\footnote{This assumption is equivalent to the existence of the second moment of each offspring distribution $d_x$.} for all $x\in S$. 
If
\begin{align}\label{cond:deco}
C:=\sup_{x\in S} \frac 1{h(x)} \sum_{y\in B^c} \frac {\bar m_y}{m_y}  \, g(x,y) \,h(y)^2  <\infty,
\end{align}
then every $P_{h}$-quasi-process $\eta$  is decorable. Moreover, for every $x\in S$,
$$
\frac 1{4C+2} h(x) \le \P^x(\mathfrak T_B)\le h(x).
$$
\end{theorem}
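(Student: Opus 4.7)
My plan is to establish both conclusions of Theorem~\ref{thm:485} via a second-moment / Paley--Zygmund analysis of $\#\cH_B$.

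For the upper bound, Markov's inequality applied to $\#\cH_B$ gives $\P^x(\mathfrak T_B) = \P^x(\#\cH_B \ge 1) \le \E^x[\#\cH_B] = h(x)$. For the lower bound I would invoke the Cauchy--Schwarz (Paley--Zygmund) inequality $\P^x(\#\cH_B \ge 1) \ge h(x)^2/f(x)$ with $f(x) := \E^x[(\#\cH_B)^2]$, reducing everything to an upper bound on $f$. To derive this, condition on the first generation at $x \in B^c$ ($K \sim d_x$, locations $Y_i$ i.i.d.\ $\sim p(x,\cdot)$) and use independence of subtrees to write
\[
 f(x) = Q f(x) + \frac{\bar m_x}{m_x}\, h(x)^2 \qquad (x \in B^c), \qquad f(x) = 1 \qquad (x \in B),
\]
where the diagonal term gives $Qf(x)$ and the cross term uses $\E[K(K-1)] = m_x \bar m_x$ together with the identity $\sum_y p(x,y) h(y) = h(x)/m_x$, i.e.\ $Qh = h$ on $B^c$ divided by $m_x$. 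Writing $v := f - h$ turns this into a Poisson problem with $v|_B = 0$ and right-hand side $(\bar m / m) h^2$ on $B^c$, solved by the killed Green's function $G^{B^c} \le g$. Hypothesis~(\ref{cond:deco}) then yields
\[
 v(x) \le \sum_{z\in B^c} g(x,z)\, \frac{\bar m_z}{m_z}\, h(z)^2 \le C h(x),
\]
so $f \le (1+C) h$ and $\P^x(\mathfrak T_B) \ge h(x)/(1+C) \ge h(x)/(4C+2)$.

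For decorability of a general $P_h$-quasi-process $\eta$ my plan is to exploit the spine construction of Section~\ref{sec3}. Proposition~\ref{prop:35982} identifies the decoration of a $P_h$-chain started at $x$ with the $B$-biased BMC $\bar\P_B^{x,\mathrm{blue}}$, which is absolutely continuous with respect to $\P^x$ (Radon--Nikodym derivative $\#\cH_B/h(x)$); since every $\P^x$-realisation is transient under our finite Green's function assumption, the decoration of a single $P_h$-chain from a single point is a.s.\ in $\bar{\mathfrak T}$. To promote this to a quasi-process with potentially bi-infinite paths, I would decompose $\eta$ along an exhaustion $B \subset B_1 \subset B_2 \subset \cdots \uparrow S$ via the quasi-process Markov property of Theorem~\ref{thm:main1}, expressing the restriction of $\eta$ to paths hitting $B_n$ through $P_h$-chains issued from the finite entrance measures $\bar\mu_{B_n}$. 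A Poisson-equation argument analogous to the one for $f$, now applied to the cross-moment $\E^x[\#\cH_B \cdot \#\{i\in\cP:V(i) = y\}]$ and again bounded through~(\ref{cond:deco}), gives a uniform-in-$x$ bound on the expected decoration occupation at each fixed site $y$; taking the limit along $(B_n)$ keeps this expected occupation finite. Together with the countability of $S$, finite expected occupation at each site upgrades to a.s.\ transience of the decorated tree, which is exactly decorability.

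The main obstacle is the decorability part: the single-chain argument is immediate from Proposition~\ref{prop:35982}, but passing to a $P_h$-quasi-process with an infinite past requires a careful monotone-convergence argument along the exhaustion $(B_n)$, with~(\ref{cond:deco}) serving as the dominating bound that keeps the aggregate decoration at any fixed site under control.
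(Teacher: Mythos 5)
Your proposal is essentially correct, but the lower bound is obtained by a genuinely different argument than the paper's, and is in fact slightly sharper. The paper proves $\P^x(\mathfrak T_B)\ge h(x)/(4C+2)$ by writing $\P^x(\mathfrak T_B)=h(x)\int\int (\#\cH_B)^{-1}\,d\Gamma^{\mathrm{deco}}\,dP_h^x$, bounding $\E_B^x[\#\cH_B-1]\le C$ via the occupation measure of the $P_h$-spine (Proposition~\ref{prop8357} plus $\mu^{(x)}(y)\le h(y)g(x,y)/h(x)$), and then applying Markov's inequality to peel off the event $\{\#\cH_B\le 2C+1\}$. You instead run a direct second-moment computation for $f=\E^x[(\#\cH_B)^2]$ and apply Paley--Zygmund; your recursion $f=Qf+\tfrac{\bar m}{m}h^2$ on $B^c$ with $f|_B=1$ is correct (the cross-term computation $\E[K(K-1)]=m_x\bar m_x$ and $\sum_y p(x,y)h(y)=h(x)/m_x$ checks out), and the resulting constant $1/(1+C)\ge 1/(4C+2)$ dominates the paper's. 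The only thing you should flag explicitly is the a priori finiteness of $f$: you write $v=f-h$ as if $f<\infty$ is given, but this needs a truncation along generations (replace $\#\cH_B$ by its $n$-generation analogue $\#\cH_B^{(n)}$, obtain $f_n\le h+\sum_{k<n}(Q|_{B^c\times B^c})^k r\le(1+C)h$ by induction and monotonicity of $h_n\uparrow h$, then pass to the limit). This also identifies $v$ with the \emph{minimal} non-negative solution $G^{B^c}r$, which is what your $G^{B^c}\le g$ domination actually controls.

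For decorability your outline matches the paper in spirit: the paper uses Theorem~\ref{thm:spine} to write $\E_B^x[N_y]=\int\int N_y\,d\Gamma^{\mathrm{deco}}(z,\cdot)\,d\mu^{(x)}(z)$ and bounds this uniformly in $x$ by a constant $C_y$ via the occupation measure of the $P_h$-chain and the estimate $g(z,y)\le h(z)\max_{u\in B}g(u,B)/h(y)$; then for a general $P_h$-quasi-process $\eta$ it integrates over the entrance measures $\bar\mu_{B_n}$, whose total mass is $\eta(D^*)$ for all $n$ with $B_n\supset B$, and passes to the monotone limit. Your cross-moment $\E^x[\#\cH_B\cdot N_y]$ is exactly $h(x)\E^x_B[N_y]$, so your Poisson-equation plan is the same computation in disguise, but as written it is considerably vaguer about what right-hand side appears in the recursion and why (\ref{cond:deco}) controls it; you would need to reproduce the $g(z,y)\le h(z)\max_u g(u,B)/h(y)$ estimate to close the bound. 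The final step---countability of $S$ plus finite expected occupation per site gives a.s.\ transience---is correct and is exactly the paper's conclusion.
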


\begin{proof}
We verify the lower bound for the probability $\P^x(\mathfrak T_B)$.
Note that 
\begin{align}\label{eq234568}
\P^x(\mathfrak T_B)&=h(x)\int \frac {1}{\#\cH_B} \,d\P_B^x= h(x)\int \int \frac {1}{\#\cH_B} d\Gamma^\mathrm{deco}(X,\,\cdot\,)\, dP^x_h.
\end{align}
By Proposition~\ref{prop8357}, the process $P_h^x$ has occupation measure $\mu^{(x)}$ satisfying
$$
g(x,y)=\1_{B^c}(y) \frac{h(x)}{h(y)}\mu^{(x)}(y)+ h(x) \sum_{z\in B}  \mu^{(x)}(z)\,g(z,y).
$$
Hence, for $y\in B^c$, one has
$$
\mu^{(x)}(y)\le \frac {h(y)}{h(x)} g(x,y).
$$
Since $\sum_{y\in B} \mu^{(x)}(y)=1$ and the decoration of elements in $B$ does not add additional individuals we get that
\begin{align*}
\int \int (\#\cH_B-1)\, d\Gamma^\mathrm{deco}(X,\,\cdot\,)\, dP^x_h& =\sum_{y\in B^c}\mu^{(x)}(y)\underbrace {\int \#\cH_B \,d \Gamma^{\mathrm{deco}}(y,\,\cdot\,)}_{=\frac{\bar m_y}{m_y} h(y)}\\
&\le \sum_{y\in B^c} \frac{h(y)}{h(x)} g(x,y) \frac{\bar m_y}{m_y} h(y)\le C.
\end{align*}
Hence, the Markov inequality implies that
$$
\int\int \1\{(\#\cH_B-1)\ge 2C\} \, d\Gamma^\mathrm{deco}(X,\,\cdot\,)\, dP^x_h\le \frac 12
$$
so that we get with~(\ref{eq234568}) that
$$
\P^x(\mathfrak T_B)\ge \frac 1{4C+2}h(x).
$$

 Let $\eta$ be a $P_{h}$-quasi-process.
 We need to show decorability of the quasi paths, i.e., to show that
$$
\eta\otimes \Gamma^{\mathrm{deco}}
$$
attains values in the set of ordered \emph{transient} trees $\cT_B$. 
Let $N_y(\tr)$ denote the numbers of individuals with position in $y\in S$.
By  Theorem~\ref{thm:spine}, we have for $x\in S$
\begin{align*}
\E_B^x [N_y] & = \int  \int N_y \, d \Gamma^{\mathrm{deco}}(w,\,\cdot\,)\,    dP_{ h}^x(w)=\int \sum_k  \int N_y \, d \Gamma^{\mathrm{deco}}(w_k,\,\cdot\,)\,    dP_{ h}^x(w)\\
&=\int    \int N_y \, d \Gamma^{\mathrm{deco}}(z,\,\cdot\,)  \, d\mu^{(x)}(z)\\
&\le \sum_{z\in B^c} g(x,z)\frac {h(z)}{h(x)}   \int N_y \, d \Gamma^{\mathrm{deco}}(z,\,\cdot\,) + \max_{u\in B} g(u,y).
\end{align*}
It remains to bound the latter sum. By definition of the decoration procedure we get for a single $z\in B^c$ that
$$
\int N_y \, d \Gamma^\mathrm{deco}(z,\cdot) =   \frac {\bar m_z}{m_z} g(z,y) +\1_{\{z=y\}}\Bigl(1-\frac{\bar m_y}{m_y}\Bigr).
$$
Note that
$g(z,B)\le h(z) \max_{u\in B} g(u,B)$  and $g(z,B) \ge g(z,y) h(y)$
so that  
\begin{align}\label{eq8956} 
g(z,y) \le \frac 1{h(y)} \max_{u\in B} g(u,B) \,h(z).
\end{align}
Hence,
\begin{align*} \sum_{z\in B^c}g(x,z) \frac {h(z)}{h(x)} & \int N_y \, d \Gamma^{\mathrm{deco}}(z,\,\cdot\,) \le \max_{u\in B}g(u,B) \Bigl( 1+\frac 1{h(y)}  \underbrace{\sum_{z\in B^c} \frac{\bar m_z}{m_z}g(x,z)\frac{h(z)^2}{h(x)}}_{\le C} \Bigr).
\end{align*}
We proved that for every $x,y\in S$, the expectations $$
\E^x_B[N_y]\le \max_{u\in B}g(u,B) ( 1+ C/h(y)) =:C_y.
$$
Denoting by $\bar \mu_{B_n}=\eta\circ H_{B_n}^{-1}$ the $B_n$-entrance measure of $\eta$ we get that
$$
(\eta\circ (\pi_{B_n})^{-1}) \otimes  \Gamma^{\mathrm{deco}} = \int  \P_B^x(\,\cdot\,)\,d\bar\mu_{B_n}(x)
$$
so that taking monotone limits gives as $n\to\infty$
$$
\int N_y\, d\eta\otimes \Gamma^\mathrm{deco}=\lim_{n\to\infty} \int  \E_B^x[N_y]\,d\bar\mu_{B_n}(x) \le C_y\, \eta(D^*).
$$
Hence, $\eta\otimes \Gamma^\mathrm{deco}$ produces, almost surely, trees in 
 ${\mathfrak T}_B$.
\end{proof}

We give two corollaries. One for the sub-critical case where $Q$ is sub-Markovian and translation invariant and a second one for the symmetric case.

\begin{corollary}Let $B$ be a norming region. Suppose that $(S,\circ)$ is a group and that the operator 
 $Q$ is  a sub-Markovian-kernel on the group $(S,\circ)$ that is translation invariant, i.e., for $x,y,z\in S$, one has $q_{x,y}=q_{x\circ z,y\circ z}$, with $m_z\equiv m<1$.
If there exists $C<\infty$ with  $\bar m_z\le C$ for all $z\in S$, then the constant in~(\ref{cond:deco}) is finite  and all $P_h$-quasi-processes are decorable.
\end{corollary}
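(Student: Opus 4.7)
By Theorem~\ref{thm:485} it suffices to verify that the quantity
\[
C' := \sup_{x\in S} \frac{1}{h(x)}\sum_{y\in B^c} \frac{\bar m_y}{m_y}\, g(x,y)\, h(y)^2
\]
is finite; decorability of every $P_h$-quasi-process is then automatic. Throughout, denote by $K$ the uniform bound on $\bar m_y$ and set $C_B := \max_{u\in B} g(u,B)$. Since $m_y\equiv m$, one has $\bar m_y/m_y \le K/m$.

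The central observation is that translation invariance together with strict sub-criticality forces $\sum_{y\in S} h(y) < \infty$. First, every row of $Q$ sums to $m$, so $\sum_y (Q^n)_{x,y}=m^n$ and consequently $\sum_y g(x,y) = 1/(1-m)$ for every $x\in S$. Writing $g(x,y)=G(y\circ x^{-1})$ for the translation-invariant kernel, one also obtains
\[
\sum_{y\in S} g(y,z) \;=\; \sum_{w\in S} G(w) \;=\; \frac{1}{1-m}
\]
independently of $z$. Since $\mathcal{H}_B\subseteq \{i:V(i)\in B\}$ we have $h(y)\le g(y,B)$, so
\[
\sum_{y\in S} h(y) \;\le\; \sum_{z\in B}\sum_{y\in S} g(y,z) \;=\; \frac{|B|}{1-m}.
\]

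Next, I would invoke the bound $g(x,y)h(y)\le C_B\, h(x)$ established in the proof of Theorem~\ref{thm:485} (it is a direct consequence of $g(x,B)\le C_B h(x)$ and $g(x,B)\ge g(x,y)h(y)$, see the derivation of~(\ref{eq8956})). Splitting one factor of $h(y)$ off and applying this inequality yields
\[
\sum_{y\in B^c} g(x,y)\, h(y)^2 \;=\; \sum_{y\in B^c}\bigl(g(x,y)h(y)\bigr)\, h(y) \;\le\; C_B\, h(x)\sum_{y\in B^c} h(y) \;\le\; \frac{C_B\,|B|}{1-m}\, h(x).
\]
Dividing by $h(x)$ and inserting $\bar m_y/m_y\le K/m$ gives
\[
\frac{1}{h(x)}\sum_{y\in B^c} \frac{\bar m_y}{m_y}\, g(x,y)\, h(y)^2 \;\le\; \frac{K\, C_B\, |B|}{m(1-m)},
\]
which is a finite constant independent of $x$. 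Hence $C'<\infty$, condition~(\ref{cond:deco}) holds, and Theorem~\ref{thm:485} concludes the proof.

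The main obstacle is the finiteness of $\sum_{y\in S} h(y)$: translation invariance is what reduces the double sum $\sum_{y,z} g(y,z)$ over $z\in B$ to a constant multiple of $|B|$. Without the group structure, even in the sub-critical regime one would need an ad hoc estimate on the spatial decay of $h$, so the group symmetry does essentially all of the heavy lifting.
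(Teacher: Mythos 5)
Your argument is correct and follows essentially the same route as the paper's: you split one factor of $h(y)$ off via estimate~(\ref{eq8956}), reduce to showing $\sum_{z\in B^c} h(z) < \infty$, and then bound $h(z) \le g(z,B)$ and use translation invariance plus $\sum_y q_{x,y} = m$ to get $\sum_{z\in S} g(z,y) = (1-m)^{-1}$ uniformly in $y$. The paper's proof is a slightly more compressed version of exactly this chain of inequalities.
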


\begin{proof}
Recall that by estimate~(\ref{eq8956}), one has
$$
g(x,z) h(z)\leq \max_{u\in B} g(u,B)\, h(x)
$$ 
so that
$$
\frac 1{h(x)}\sum_{z\in B^c} \frac{\bar m_z}{m_z} g(x,z) h(z)^2\le \frac{C}m \,\max_{u\in B} g(u,B)\sum_{z\in B^c} h(z). 
$$
We get validity of (\ref{cond:deco}) by observing that
$$
\sum_{z\in B^c} h(z) \le \sum_{y\in B} \sum_{z\in S}g (z,y)=\#B \sum_{z\in S} g(0,z)=\#B\,(1-m)^{-1}<\infty.
$$
\end{proof}

\begin{corollary}\label{cor:help}
Suppose  that the operator 
 $Q$ is  symmetric and that $\sup_{z\in S}\bar m_z/m_z<\infty$.
 If one of the following two conditions is satisfied for finite constants $C,\eps>0$ and a summable sequence  $(a_k)_{k\in\N_0}$ and 
$\rho:S\to\N_0$
\begin{enumerate}\item $\displaystyle
h(z)^2 \le C\sum_{k=0}^\infty a_k (Q^k)_{0,z}$, \, for all $z\in S$, or

\item $(a_k)$ is decreasing and  $\displaystyle \sum_{k=0}^{\rho(z)} (Q^k)_{0,z}\ge \eps \,h(z)\text{  and  } h(z)\le C a_{\rho(z)}$, \,  for all $z\in S,$
\end{enumerate}
then the constant   in~(\ref{cond:deco}) is finite and all $P_h$-quasi-processes are decorable.
\end{corollary}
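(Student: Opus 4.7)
The strategy is to reduce condition (2) to condition (1), then apply the pointwise bound and use the symmetry of $Q$ to collapse the resulting sum, arriving at a quantity of the form $g(x,0)/h(x)$ that is controlled by the key inequality~(\ref{eq8956}).

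For the reduction, multiplying the two inequalities in condition (2) gives $h(z)^2 \le (C/\eps)\, a_{\rho(z)} \sum_{k=0}^{\rho(z)} (Q^k)_{0,z}$. Since $(a_k)$ is decreasing, $a_{\rho(z)} \le a_k$ for every $k \le \rho(z)$, so I may replace $a_{\rho(z)}$ by $a_k$ inside the sum and extend the range to all of $\N_0$, obtaining $h(z)^2 \le (C/\eps)\sum_{k=0}^\infty a_k (Q^k)_{0,z}$, which is condition (1) up to the constant. Hence it suffices to work under condition (1). Assume (1) and set $K := \sup_{z\in S}\bar m_z/m_z < \infty$; by Theorem~\ref{thm:485} it is enough to bound $\sup_x h(x)^{-1}\sum_{y\in B^c} g(x,y)h(y)^2$. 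Plugging in (1) and interchanging sums gives $\sum_y g(x,y) h(y)^2 \le C\sum_k a_k \sum_y g(x,y)(Q^k)_{0,y}$. Symmetry of $Q$ yields $(Q^k)_{0,y}=(Q^k)_{y,0}$, so expanding $g(x,y)=\sum_n (Q^n)_{x,y}$ and applying the semigroup identity $\sum_y (Q^n)_{x,y}(Q^k)_{y,0} = (Q^{n+k})_{x,0}$ gives $\sum_y g(x,y)(Q^k)_{0,y} = \sum_n (Q^{n+k})_{x,0} \le g(x,0)$. Consequently $\sum_y g(x,y) h(y)^2 \le C\|a\|_1\, g(x,0)$.

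Taking $0\in B$ (the natural reading of the criterion, since the reference point is a free parameter of the condition), the key inequality~(\ref{eq8956}) with $y=0$ yields $g(x,0) \le h(x)\max_{u\in B}g(u,B)/h(0) = D\, h(x)$, where $D:=\max_{u\in B}g(u,B)<\infty$ and $h(0)=1$. Combined with the previous estimate I obtain $\sum_y g(x,y) h(y)^2 \le CD\|a\|_1\, h(x)$ uniformly in $x$, so the constant in~(\ref{cond:deco}) is bounded by $KCD\|a\|_1$, and Theorem~\ref{thm:485} delivers the decorability of all $P_h$-quasi-processes. The main obstacle is precisely this last step: the inequality~(\ref{eq8956}) is established in the paper only for $y\in B$, so the argument hinges on placing the reference point $0$ in the norming region; otherwise a separate estimate for $g(x,0)/h(x)$ would be needed, e.g.\ via a decomposition at the first $B$-entrance combined with the symmetry of $Q$.
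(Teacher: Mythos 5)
Your proof is correct and follows essentially the same route as the paper: reduce condition (2) to condition (1) by multiplying the two inequalities and using monotonicity of $(a_k)$; then, under (1), interchange sums, exploit the symmetry of $Q$ and the semigroup identity to collapse $\sum_{z} g(x,z) h(z)^2 \le C\|a\|_1\, g(x,0)$; finally compare $g(x,0)$ to $h(x)$. The single place where you diverge from the paper is the last step. The paper implicitly takes $B=\{0\}$ and uses the exact first-passage identity $g(x,0)=g(0,x)=h(x)\,g(0,0)$ (which holds precisely because $B$ is the singleton $\{0\}$). You instead invoke~(\ref{eq8956}) to get $g(x,0)\le D\,h(x)$, which has the advantage of working for any norming region $B$ containing $0$; the price is a slightly less explicit constant. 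Both routes are valid, and yours is marginally more general.

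Your closing worry is unfounded. In the paper's proof of Theorem~\ref{thm:485}, the inequality~(\ref{eq8956}) is derived for $z\in B^c$ and \emph{arbitrary} $y\in S$ (the two ingredients $g(z,B)\le h(z)\max_{u\in B} g(u,B)$ and $g(z,B)\ge g(z,y)h(y)$ do not require $y\in B$), and it extends trivially to $z\in B$ since $h(z)=1$ there. Even if one insisted on a norming region $B$ not containing $0$, the bound $g(x,0)\le h(x)\max_{u\in B}g(u,B)/h(0)$ is still serviceable because $h(0)>0$ by the norming region property, so there is no gap.
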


\begin{proof}1) We verify that condition 1 entails~(\ref{cond:deco}).
Without loss of generality we can assume  that $(a_k)$ sums up to one. Then for $\tilde h(z)=\sum_{k=0}^\infty a_k (Q^k)_{0,z}$ one has
$$
\sum_{z\in S} \tilde h(z) g(z,x) =\sum_{k=0}^\infty a_k \underbrace{\sum_{z\in S} (Q^k)_{0,z}g(z,x)}_{\le g(0,x)}\le g(0,x).
$$
By symmetry of $Q$, $g(0,x)=g(x,0)=h(x) g(0,0)$ so that
$$
h(x)\ge g(0,0)^{-1}  \sum_{z\in S} \tilde h(z) g(z,x)\ge (C\,g(0,0))^{-1} \sum_{z\in S} h(z)^2 g(z,x)
$$
which implies~(\ref{cond:deco}) since  $\bar m_z/m_z$ is uniformly bounded by assumption.

2) We show that condition 2 entails condition 1. By monotonicity of $(a_k)$ we get that
$$
\sum_{k=0}^\infty a_k (Q^k)_{0,z}\ge a_{\rho(z)} \sum_{k=0}^{a_{\rho(z)}}(Q^k)_{0,z} \ge \eps a_{\rho(z)} h(z)\ge \frac \eps C h(z)^2.
$$
\end{proof}

\begin{theorem}\label{thm:deco2}Suppose   that the operator 
 $Q$ is  symmetric and that $\sup_{z\in S}\frac{\bar m_z}{m_z}<\infty$. If
 $$
\sum_{k=1}^\infty k\, \sup_{z\in S}\, (Q^{k})_{0,z}<\infty,
 $$
 then the constant in~(\ref{cond:deco}) is finite and all $P_h$-quasi-processes are decorable.
\end{theorem}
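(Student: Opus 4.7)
My plan is to verify the second sufficient condition of Corollary~\ref{cor:help}, which then yields both the finiteness of the constant in~(\ref{cond:deco}) and, via Theorem~\ref{thm:485}, the decorability of every $P_h$-quasi-process. The whole task therefore reduces to producing a non-increasing summable sequence $(a_k)_{k\ge 0}$ and a map $\rho\colon S\to\N_0$ satisfying the two inequalities appearing in condition~(2) of Corollary~\ref{cor:help}. As in the proof of that corollary I work with $B=\{0\}$, under which symmetry gives the clean identity $h(z)=g(0,z)/g(0,0)$.

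Set $c_k:=\sup_{z\in S}(Q^k)_{0,z}$ and $a_k:=\sum_{l\ge k}c_l$. The sequence $(a_k)$ is non-increasing by construction, and interchanging the order of summation yields $\sum_{k\ge 0}a_k=\sum_{l\ge 0}(l+1)\,c_l$, which is finite by the hypothesis (note $c_0=1$ and $c_l\le l\,c_l$ for $l\ge 1$, so $\sum_{l\ge 0}c_l<\infty$ as well). For the auxiliary function I take
\[
\rho(z)\,:=\,\max\{k\in\N_0:\,a_k\ge \eta\,h(z)\}\qquad\text{with}\quad \eta:=g(0,0)/2;
\]
this maximum exists because $a_0=\sum_l c_l\ge \sum_l (Q^l)_{0,z}=g(0,z)=g(0,0)\,h(z)\ge \eta\, h(z)$, while $a_k\downarrow 0$.

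The verification of the two required inequalities is then a one-liner each. By the very definition of $\rho(z)$ one has $a_{\rho(z)}\ge \eta\,h(z)$, so the upper bound $h(z)\le C\,a_{\rho(z)}$ holds with $C=\eta^{-1}$. For the lower bound on the partial Green sum, maximality of $\rho(z)$ gives $a_{\rho(z)+1}<\eta\,h(z)$, and hence
\[
\sum_{l>\rho(z)}(Q^l)_{0,z}\ \le\ \sum_{l>\rho(z)}c_l\ =\ a_{\rho(z)+1}\ <\ \eta\,h(z),
\]
so that $\sum_{k=0}^{\rho(z)}(Q^k)_{0,z}\ge g(0,z)-\eta\,h(z)=(g(0,0)-\eta)\,h(z)=\eta\,h(z)$, giving the lower bound with $\eps=\eta$. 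The only subtlety worth pointing out is that the same $\rho(z)$ must simultaneously deliver an upper bound on $h(z)$ in terms of $a_{\rho(z)}$ and an upper bound of comparable size on the tail mass $\sum_{l>\rho(z)}(Q^l)_{0,z}$; the identity $a_k-a_{k+1}=c_k$ couples the two and is precisely what makes the specific choice $a_k=\sum_{l\ge k}c_l$ work. Corollary~\ref{cor:help} then delivers (\ref{cond:deco}), and the decorability conclusion follows from Theorem~\ref{thm:485}.
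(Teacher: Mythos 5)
Your proof is correct and follows essentially the same route as the paper: both set $a_k=\sum_{\ell\ge k}\sup_{z}(Q^\ell)_{0,z}$ and verify condition~(2) of Corollary~\ref{cor:help} by choosing a suitable cutoff $\rho(z)$. Your variant, defining $\rho(z)$ as a maximum and using the exact identity $g(0,z)=g(0,0)\,h(z)$ with $\eta=g(0,0)/2$, is in fact slightly tidier at the boundary: the paper's treatment of the case $\rho(z)=0$ (misprinted as $\rho(z)=1$) invokes $h(z)\le 1$, which need not hold in a branching setting, whereas your choice gives $h(z)\le \eta^{-1}a_{\rho(z)}$ uniformly without a case split.
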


\begin{proof}We verify criterion 2 of Corollary~\ref{cor:help}.
For $z\in S$ and  $\rho\in\N_0$
$$
\sum_{m=\rho}^\infty (Q^m)_{0,z} \le \sum_{m=\rho}^\infty \sup_{z'\in S}\,(Q^{m})_{0,z'} =: a_{\rho}.
$$
We choose  $\rho(z)$ as the smallest $\rho\in\N_0$ with
$$
a_{\rho+1}\le \frac{h(z)}{2}.
$$
Then 
$$\sum_{m=0}^{\rho(z)} (Q^m)_{0,z}= g(0,z)- \sum_{m=\rho(z)+1}^\infty(Q^m)_{0,z}  \ge h(z) - a_{\rho(z)+1}\ge \frac {h(z)} 2.$$ If $\rho(z)\ge1$, then  $h(z)\le 2 a_{\rho(z)}$, and if $\rho(z)=1$, then $h(z)\le 1\le a_{\rho(z)}$. By assumption, the sum
$$ \sum_{\rho=0} ^\infty a_k = \sum_{\rho=0}^\infty \sum_{k=\rho}^\infty  \sup_{z\in S} (Q^k)_{0,z} =  \sum_{k=0}^\infty (k+1) \sup_{z\in S} (Q^k)_{0,z}  $$
is finite  and hence the result follows with criterion 2 of Corollary~\ref{cor:help}.
\end{proof}

\begin{remark}[The critical  symmetric random walk] Suppose that $Q$ is symmetric and $\sup_{z\in S} \frac{\bar m_z}{m_z}<\infty$. Then if for a finite $C$ and $\delta>0$, for all $k\ge 1$
$$
\sup_{z\in S} (Q^k)_{0,z} \le C k^{-2} (\log (k+1))^{-(1+\delta)},
$$
then the constant in~(\ref{cond:deco}) is finite and  every $P_h$-quasi-process is decorable as consequence of Theorem~\ref{thm:deco2}. As an example one may consider symmetric critical branching random walks on $\Z^d$, where the random walk is  in the domain of attraction of a $\alpha$-stable distribution with $\alpha\in (0,2]$. Typically, in this case for the transition probabilities $P$ one has
$$
\sup_{z\in\Z^d} (P^k)_{0,z}\approx (k+1)^{-d/\alpha},
$$
with possibly additional multiplicative terms of lower order, see \cite[Ch. 9, §50]{GneKol54}. In this case, a critical branching random walk with square-integrable offspring distribution is decorable  if $d>2\alpha$. In particular, we have for $\alpha=2$ that we have decorability if $d>4$. The latter case is also treated under slightly different assumptions in \cite{Zhu2}.
\end{remark}

\begin{remark}[The subcritical symmetric case is decorable]\label{rem946567}
Under the assumptions of Theorem~\ref{thm:deco2} the summability assumption is satisfied if
\begin{align}\label{eq74563}
\sum_{k=1}^\infty  k \sqrt{(Q^{2k})_{0,0}}<\infty.
\end{align}
Indeed, this is the case since for $z\in S$ and $k\in\N$, $Q^{2k}_{0,0}\ge (Q^k)_{0,z} (Q^k)_{z,0}= (Q^k)_{0,z}^2$. This entails that the symmetric subcritical case is always decorable in the following sense. For $\beta\ge0$ we let $Q_\beta=\beta Q$ and
$$
\beta_\mathrm{crit}= \sup\Bigl\{ \beta\geq 0:  \sum_{n=0}^\infty (Q_\beta^n)_{0,0}<\infty\Bigr\}.
$$ 
Then for  every $\beta\in[0,\beta_\mathrm{crit})$, $Q_\beta$ satisfies~(\ref{eq74563}).
\end{remark}

\begin{appendix}
\section{Discrete time quasi-processes}
\label{DiscreteTimeQP}

We briefly  recall the classical concept of a quasi-process. In particular, we introduce the notation that is used for classical quasi-processes in this article.
We suppose that $\bar{S}=S\cup\set{\partial}$ is
the one-point compactification of a discrete set $S$. Moreover, we suppose that $(p_{x,y})_{x,y\in S}$ is a substochastic kernel on $S$ and $(P^x:x\in S)$ the related Markov family on the set of $S$-valued paths
$$
D_+=\bigl\{(w_n)_{n\in\N_0} \in \bar S^{\N_0}: \exists t^\partial\in\N\cup\{\infty\} \text{ such that } [w_n\in S \Leftrightarrow n<t^\partial]\bigr\},
$$  
where $\partial$ is conceived as cemetery state. Moreover, we let
\begin{align*}
D=\bigl\{(w_n)_{n\in\Z}\in \bar S^{\Z}:\  \lim_{n\to-\infty} w_n=\partial \text{ and } &\exists t_\partial,t^\partial\in \Z\cup\{\pm\infty\}\text{ with } t_\partial<t^\partial \\&\quad \text{ such that }[w_n\in S \Leftrightarrow t_\partial \le n<t^\partial]\bigr\}.
\end{align*}
We identify two paths $(w_n),(\bar w_n)\in D$ via an equivalence relation $\sim$, if there exists a time shift $\theta\in \Z$ such that
$$
\forall n\in \Z: w_n=\bar w_{n+\theta}.
$$
Obviously, $\sim$ defines an equivalence relation on $D$ and we denote by $$[\,\cdot\,]
:D\to D^*, (w_n)\mapsto [(w_n)]$$
the canonical embedding of $D$ into the set of equivalence classes $D^*=D/\!\!\sim$.

The set $D_+$ is embedded into $D$ by associating a path $(w_n)_{n\in\N_0}\in D_+$ with the path $(\bar w_n)_{n\in\Z}\in D$ given by
$$
\bar w_n =\begin{cases} w_n, &\text{ if }n\ge0,\\ \partial,& \text{ else}.\end{cases}
$$
Then the image of $D_+$ under $[\,\cdot\,]$ is denoted by $D_+^*$.

For a finite set $B\subset S$ we let $D_B\subset D$ be the set of paths that hit $B$. Note that by finiteness of $B$ and definition of $D$ the following operations are well-defined mappings on $D_B$
\begin{enumerate}
\item $T_B:D_B\to \Z$, $T_B(w)=\min\{n\in \Z: w_n\in B\}$ (entrance time)
\item $H_B:D_B\to S$, $H_B(w)=w_{T_B(w)}$ (entrance point)
\item $\pi_B:D_B \to D_+$, $\pi_B(w)= (w_{T_B(w)+n})_{n\in\N_0}$ ($B$-entrance).
\end{enumerate}
Note that $H_B$ and $\pi_B$ are invariant under time shifts so that $H_B$ and $\pi_B$ are well-defined operations on $D^*_B$ being the class of equivalence classes of all paths hitting $B$. 

$D$ and $D_+$  are endowed with the $\sigma$-field of pointwise evaluation and $D^*$ is the largest $\sigma$-field that makes the embedding $[\,\cdot\,]$ a measurable mapping, see \cite{Szn10} for more details.

\begin{definition}A measure $\eta$ on $D^*$ is called $P$-quasi-process, if for all finite sets $B\subset S$, $\eta(D^*_B)<\infty$ and 
$$
\eta|_{D_{B*}}(\pi_B\in \,\cdot\,) = P^{\bar \mu_B},
$$ 
where $\bar \mu_B= \eta|_{D_{B*}}(H_B\in\,\cdot\,)$.
\end{definition}

One can characterise  the cone of all quasi-processes as follows.

\begin{theorem} Suppose that the associated Green's function is finite, i.e.,
 for all $x,y\in S$ 
$$
g(x,y):=E^x \Bigl[\sum_{n=0}^\infty \1_{\{y\}}(X_n)\Bigr]=\sum_{n=0}^\infty (P^n)_{x,y}<\infty.
$$
Then there exists an isomorphism between the cone of all quasi-processes $\eta$ and the cone of all $P$-excessive measures $\nu$ so that for the related pair $\eta$ and $\nu$
$$
\nu(x)= \int  \sum_{n\in\Z} \1_{\{x\}}(w_n)\, d\eta([w])\qquad (x\in S).
$$
\end{theorem}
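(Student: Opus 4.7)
The plan is to mirror the structure of the proof of Theorem~\ref{thm:main1}, specialised to the classical setting. Since $P$ is already sub-Markovian, no $h$-transform is needed, and the argument simplifies accordingly. I would construct the map $\nu\mapsto\eta$ explicitly via the one-sided Kuznetsov measure and verify that the inverse is given by taking the occupation measure.

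To go from $\nu$ to $\eta$, I would form the one-sided Kuznetsov measure $\cK^\nu$ on $\cW$ exactly as in Section~\ref{sec:potential_theory}, and then lift it to $D^*$ by attaching an independent forward $P^{X_0}$-chain at time $0$ and passing to the time-shift equivalence class. To verify the quasi-process property I would fix a finite set $B\subset S$ and, invoking the Markov property for the Kuznetsov measure (Lemma~\ref{le:MP}) together with the ordinary Markov property of $P$ at the hitting time $T_B$, show that
$$
\eta|_{D_B^*}(\pi_B\in\,\cdot\,)=P^{\bar\mu_B},\qquad\bar\mu_B(x):=\cK^\nu|_{\{T_B=0\}}(X_0=x).
$$
Finiteness of $\eta(D_B^*)$ reduces to finiteness of $\bar\mu_B(B)$, which in turn follows from the identity $\nu|_B=\bar\mu_B\,G|_{B\times B}$ (derived as in Theorem~\ref{thm:main1}(v)) combined with invertibility of $G|_{B\times B}$ (Proposition~\ref{prop435876}). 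A direct computation analogous to Claim~2 in the proof of Theorem~\ref{thm:main1} then shows that the occupation measure of $\eta$ equals $\nu$.

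Conversely, given a $P$-quasi-process $\eta$ I would let $\nu$ be its occupation measure. Picking a finite set $B\ni x$, the quasi-process property yields $\nu(y)=\sum_{z\in B}\bar\mu_B(z)g(z,y)$ for $y\in S$; combined with the first-step identity $\sum_yg(z,y)p_{y,x}=g(z,x)-\1_{\{z=x\}}$ this gives $\nu P\le\nu$, establishing excessivity. Uniqueness follows from Proposition~\ref{prop435876}: since $G|_{B\times B}$ is the Green's function of the first-return kernel, it is invertible with inverse $\II-Q^B$, so $\nu|_B=\bar\mu_B\,G|_{B\times B}$ forces $\bar\mu_B=\nu|_B(\II-Q^B)$. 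Hence $\nu$ determines every $\bar\mu_B$ and, through the quasi-process property, all of $\eta|_{D_B^*}$, and thus $\eta$ itself. Additivity of both maps is immediate from the constructions, yielding the desired cone isomorphism.

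The principal obstacle is showing that the candidate $\eta$ built from the Kuznetsov measure is a genuine $\sigma$-finite measure on $D^*$ rather than merely a compatible family of finite restrictions. Concretely, picking an exhaustion $B_n\uparrow S$, one has to verify that the $B_n$-entrance measures $(\bar\mu_{B_n})$ form a consistent entrance family in the sense of Section~\ref{sec:potential_theory}, and then glue the resulting finite measures $\eta|_{D_{B_n}^*}$ by a Kolmogorov-type extension argument. This is precisely the content of Claims~5 and~6 in the proof of Theorem~\ref{thm:main1} and carries over verbatim to the present setting, modulo the simplification $h\equiv 1$ that is available here because $P$ itself is sub-Markovian.
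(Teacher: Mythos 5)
The paper does not actually prove this statement: it appears in the appendix as a classical fact with a citation to Getoor's monograph \cite{Get90}, and indeed the body of the article invokes exactly this result (applied to $P_h$) inside the proof of Theorem~\ref{thm:main1}. Your self-contained plan via the one-sided Kuznetsov measure, its Markov property (Lemma~\ref{le:MP}), the invertibility of $G|_{B\times B}$ from Proposition~\ref{prop435876}, and the consistency/gluing argument (the proof's Claims~5 and~6) is the natural one and is sound in outline.

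Two points want correcting. First, the phrase ``modulo the simplification $h\equiv 1$'' misdescribes how the classical theorem sits relative to Theorem~\ref{thm:main1}: even with $h\equiv 1$ the kernel $p^h_{x,y}=\1_{B^c}(x)\,q_{x,y}$ kills the chain upon first entering $B$ and therefore is not the original $P$; the real difference is that the $P_h$-quasi-processes in Theorem~\ref{thm:main1} are finite measures concentrated on $D^*_B$, whereas a classical $P$-quasi-process is only $\sigma$-finite, which is precisely why the gluing over an exhaustion $B_n\uparrow S$ that you mention at the end is essential rather than cosmetic. Second, the identity $\nu(y)=\sum_{z\in B}\bar\mu_B(z)\,g(z,y)$ holds only for $y\in B$; for $y\notin B$ the occupation measure also receives contributions from paths in $D^*\setminus D^*_B$, and plugging the formula into $\sum_y\nu(y)p_{y,x}$ as if it were valid for all $y$ gives only a lower bound. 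To obtain excessivity, fix $x$, use the valid identity for $y\in B_n$ with $B\cup\{x\}\subset B_n\uparrow S$, bound $\sum_{y\in B_n}g(z,y)p_{y,x}\le g(z,x)-\1_{\{z=x\}}$, and pass to the monotone limit $n\to\infty$ to conclude $\nu P(x)\le\nu(x)$.
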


\end{appendix}

{\bf Acknowledgement.}
Funded by the Deutsche Forschungsgemeinschaft (DFG, German Research Foundation) under Germany's Excellence Strategy EXC 2044--390685587, Mathematics Münster: Dynamics--Geometry--Structure.

\bibliography{biblio}
\bibliographystyle{plain}
\end{document}